\renewcommand\MR[1]{\relax} 
\newtheorem{thm}{Theorem}[section]
\numberwithin{equation}{section}
\newtheorem{cor}[thm]{Corollary}
\newtheorem{lemma}[thm]{Lemma}
\newtheorem{prop}[thm]{Proposition}
\theoremstyle{definition}
\theoremstyle{remark}
\newtheorem{remark}[thm]{Remark}
\newtheorem{example}[thm]{Example}
\newtheorem{notation}[thm]{Notation}
\newtheorem{mycomment}[thm]{Comment}
{\end{mycomment}\endgroup}
\def\mathcs{C^{*}}
\newcommand{\cs}{\ensuremath{\mathcs}}
\DeclareMathSymbol{\rtimes}{\mathbin}{AMSb}{"6F}
\newcommand{\ib}{im\-prim\-i\-tiv\-ity bi\-mod\-u\-le}
\newcommand{\sme}{\,\mathord{\mathop{\text{--}}\nolimits_{\relax}}\,}
\def\R{\mathbf{R}}
\def\C{\mathbf{C}}
\def\T{\mathbf{T}}
\def\K{\mathcal{K}}
\newcommand\set[1]{\{\,#1\,\}}
\newcommand\sset[1]{\{#1\}}
\let\tensor=\otimes
\def\labelenumi{\textnormal{(\@alph\c@enumi)}}
\def\theenumi{\@alph \c@enumi}
\def\labelenumii{\textnormal{(\@roman\c@enumii)}}
\def\theenumii{\@roman \c@enumii}
\def\alphapart#1{\charno=96
\advance\charno by#1\char\charno}
\def\<{\langle}
\def\>{\rangle}
\let\ipscriptstyle=\scriptscriptstyle
\def\lipsqueeze{{\mskip -3.0mu}}
\def\ripsqueeze{{\mskip -3.0mu}}
\def\ipcomma{\nobreak\mathrel{,}\nobreak}
\newbox\ipstrutbox
\def\ipstrut{\copy\ipstrutbox}
\def\lip#1<#2,#3>{\mathopen{\relax_{\ipstrut\ipscriptstyle{
#1}}\lipsqueeze
\langle} #2\ipcomma #3 \rangle}
\def\blip#1<#2,#3>{\mathopen{\relax_{\ipstrut
\ipscriptstyle{ #1}}\lipsqueeze\bigl\langle} #2\ipcomma #3 \bigr\rangle}
\def\rip#1<#2,#3>{\langle #2\ipcomma #3
\rangle_{\ripsqueeze\ipstrut\ipscriptstyle{#1}}}
\def\brip#1<#2,#3>{\bigl\langle #2\ipcomma #3
\bigr\rangle_{\ripsqueeze\ipstrut\ipscriptstyle{#1}}}
\def\angsqueeze{\mskip -6mu}
\def\smangsqueeze{\mskip -3.7mu}
\def\trip#1<#2,#3>{\langle\smangsqueeze\langle #2\ipcomma #3
\rangle\smangsqueeze\rangle_{\ripsqueeze\ipstrut\ipscriptstyle{#1}}}
\def\btrip#1<#2,#3>{\bigl\langle\angsqueeze\bigl\langle #2\ipcomma
#3
\bigr\rangle
\angsqueeze\bigr\rangle_{\ripsqueeze\ipstrut\ipscriptstyle{#1}}}
\def\tlip#1<#2,#3>{\mathopen{\relax_{\ipstrut\ipscriptstyle{
#1}}\lipsqueeze \langle\smangsqueeze\langle} #2\ipcomma #3
\rangle\smangsqueeze\rangle}
\def\btlip#1<#2,#3>{\mathopen{\relax_{\ipstrut\ipscriptstyle{
#1}}\lipsqueeze
\bigl\langle\angsqueeze\bigl\langle} #2\ipcomma #3
\bigr\rangle\angsqueeze\bigr\rangle}
\def\ip(#1|#2){(#1\mid #2)}
\def\bip(#1|#2){\bigl(#1 \mid #2\bigr)}
\def\Bip(#1|#2){\Bigl( #1 \bigm| #2 \Bigr)}
\newcommand\go{G^{(0)}}
\newcommand\X{\mathsf{X}}
\newcommand\Ind{\operatorname{Ind}}
\renewcommand\H{\mathcal{H}}
\newcommand\atensor{\odot}
\renewcommand\L{\mathcal{L}}
\def\ipu(#1|#2){\bip({#1}|{#2})_{u}}
\newcommand\B{\mathcal{B}}
\newcommand\E{\mathcal{E}}
\newcommand\csgb{\cs(G;\B)}
\newcommand\csrgb{\cs_{r}(G;\B)}
\newcommand\gcgb{\Gamma_{c}(G;\B)}
\newcommand\gcgub{\Gamma_{c}(G_{u};\B)}
\def\e(#1){\mathfrak{e}(#1)}
\def\b(#1){\mathfrak{b}(#1)}
\newcommand\Ripu{\rip A_{u}}
\newcommand\Xu{\X_{u}}
\newcommand\Y{\mathsf{Y}}
\renewcommand\vec[1]{\mathbf{#1}}
\newcommand\W{\mathsf{W}}
\newcommand\dual[1]{\flat(#1)}
\newcommand\dX{\widetilde{\X}}
\newcommand\Ku{\K(\Xu)}
\newcommand\ko{\K_{0}(\Xu)}
\def\charfcn#1{\mathbb{1}_{#1}}
\newcommand\ju{j_{u}}
\newcommand\Wg{\W_{\gamma}}
\newcommand\Wgo{\W_{0}^{\gamma}}
\newcommand\A{\mathcal{A}}
\newcommand\indpiu{\Ind\pi_{u}}
\definecolor{refkey}{cmyk}{0.93,0.33,0.92,0.25}
\definecolor{labelkey}{cmyk}{0.93,0.33,0.92,0.25}
\begin{document}

\begin{abstract}
  If $p \colon \B\to G$ is a Fell bundle over an \'etale groupoid, then we
  show that there is an norm reducing injective linear map
  $j \colon \csrgb\to \Gamma_{0}(G;\B)$ generalizing the well know map
  $j \colon \cs_{r}(G)\to C_{0}(G)$ in the
  case of an \'etale groupoid.
\end{abstract}

\title{Renault's $j$-map for Fell bundle \cs-algebras}

\author[Duwenig]{Anna Duwenig}
\address{School of Mathematics and Applied Statistics \\ University of
  Wollongong, Wollongong, NSW 2522, Australia}
\email{aduwenig@uow.edu.au}

\author[Williams]{Dana P. Williams}
\address{Department of Mathematics\\ Dartmouth College \\ Hanover, NH
  03755-3551 USA}
\email{dana.williams@Dartmouth.edu}

\author[Zimmerman]{Joel Zimmerman}
\address{School of Mathematics and Applied Statistics \\ University of
  Wollongong, Wollongong, NSW 2522, Australia}
\email{joelz@uowmail.edu.au}

\thanks{This research was supported by the Edward Shapiro fund at
  Dartmouth College.  The first-named author was partially supported
  by a RITA Investigator grant \mbox{(IV017)}.}

\date{2 March 2022}

\maketitle


\section{Introduction}
\label{sec:introduction}

One of the key tools in working with \'etale groupoids is Renault's
``j-map'' from \cite{ren:groupoid}*{Proposition~II.4.2}.
Specifically, Renault shows that there is an injective norm reducing
linear map $j \colon \cs_{r}(G)\to C_{0}(G)$ such that
$j(f)(\gamma)=f(\gamma)$ when $f\in C_{c}(G)$.  Crucially, $j$ also
preserves the algebraic operations so that if $a,b\in\cs_{r}(G)$, then
\begin{equation}
  \label{eq:37}
  j(a^{*})(\gamma)=\overline{j(a)(\gamma^{-1})} \quad \text{and} \quad
  j(a*b)(\gamma)=\sum_{\eta\in G^{r(\gamma)}} j(a)(\eta) \,  j(b)(\eta^{-1}\gamma).
\end{equation}
In fact, Renault works with a continuous $2$-cocycle, and has observed
in \cite{ren:irms08} that, more generally, a similar result holds for
twists over \'etale groupoids.  A proof is supplied in
\cite{bfpr:nyjm21}*{Proposition~2.8}.

The purpose of this note is to extend this result to the \cs-algebra
of a Fell bundle $p \colon \B\to G$ over an \'etale groupoid $G$.
Specifically, we prove the following.

\begin{thm}
  \label{thm-main} Suppose that $p \colon \B\to G$ is a separable,
  saturated Fell bundle over a second countable, locally compact
  Hausdorff \'etale groupoid $G$.  Then there is an \emph{injective}
  norm reducing linear map $j \colon \csrgb\to \Gamma_{0}(G;\B)$ such
  that $j(f)(\gamma)=f(\gamma)$ if $f\in \gcgb$.  Furthermore, for all
  $a,b\in\csrgb$ and $\gamma\in G$, we have
  \begin{equation}
    \label{eq:43}
    j(a^{*})(\gamma)=j(a)(\gamma^{-1})^{*}
  \end{equation}
  and
  \begin{equation}
    \label{eq:44}
    j(a*b)(\gamma)=\sum_{\eta\in G^{r(\gamma)}} j(a)(\eta)
    \, j(b)(\eta^{-1}\gamma)
  \end{equation}
  where the sum converges in the norm topology in $B_{\gamma}$.
\end{thm}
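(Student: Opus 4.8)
The plan is to realize $\csrgb$ through its regular representations and to read off the value $j(a)(\gamma)$ as an ``off\-diagonal matrix coefficient.'' For each unit $u\in\go$ write $\X_u$ for the right Hilbert $B_u$-module obtained by completing $\Gamma_c(G_u;\B)$ (where $G_u=s^{-1}(u)$, a discrete set since $G$ is \'etale) in the norm from $\langle\xi,\zeta\rangle_{B_u}=\sum_{\mu\in G_u}\xi(\mu)^{*}\zeta(\mu)$, and let $L_u$ be the regular representation of $\gcgb$ on $\X_u$, so that $\|a\|_r=\sup_u\|L_u(a)\|$. Two elementary observations drive everything. First, evaluation $\mathrm{ev}_\gamma\colon\X_u\to B_\gamma$, $\xi\mapsto\xi(\gamma)$, is norm-decreasing, since $\xi(\gamma)^{*}\xi(\gamma)\le\langle\xi,\xi\rangle_{B_u}$ in $B_u$; it is adjointable, with $\mathrm{ev}_\gamma^{*}(b)$ the section $\delta_\gamma^{b}$ supported at $\gamma$ with value $b$. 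Second, a direct computation on $\gcgb$ gives, for $c\in B_u$ and $f\in\gcgb$, the point-mass formula $(L_u(f)\delta_\gamma^{c})(\mu)=f(\mu\gamma^{-1})c$, and in particular $(L_u(f)\iota_u(c))(\mu)=f(\mu)c$ where $\iota_u(c)=\delta_u^{c}$. Feeding an approximate unit $(e_\lambda)$ for $B_u$ into the latter and using saturation (so that $f(\gamma)e_\lambda\to f(\gamma)$ in $B_\gamma$) yields the crucial pointwise bound
$$\|f(\gamma)\|=\lim_\lambda\|f(\gamma)e_\lambda\|=\lim_\lambda\|\mathrm{ev}_\gamma(L_u(f)\iota_u(e_\lambda))\|\le\|f\|_r\qquad(f\in\gcgb).$$
As $\gcgb$ is dense in $\csrgb$, this lets me define $j(a)(\gamma):=\lim_n f_n(\gamma)$ for any $f_n\to a$; the estimate makes this independent of the sequence, linear, and norm-reducing, and since $f_n\to j(a)$ uniformly on $G$ with each $f_n$ compactly supported, $j(a)$ is a continuous section vanishing at infinity, so $j(a)\in\Gamma_0(G;\B)$, with $j(f)=f$ on $\gcgb$.

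The heart of the argument is the identity, for $a\in\csrgb$, $\psi\in\X_u$ and $\gamma\in G_u$,
$$(L_u(a)\psi)(\gamma)=\sum_{\eta\in G^{r(\gamma)}}j(a)(\eta)\,\psi(\eta^{-1}\gamma),$$
the sum converging in norm in $B_\gamma$, which I would prove by truncation inside the module. The point-mass formula extends by continuity to $(L_u(a)\delta_\mu^{\psi(\mu)})(\gamma)=j(a)(\gamma\mu^{-1})\psi(\mu)$ for $\mu\in G_u$; summing over a finite $F\subseteq G_u$ and writing $\chi_F\psi=\sum_{\mu\in F}\delta_\mu^{\psi(\mu)}$ gives $\mathrm{ev}_\gamma(L_u(a)(\chi_F\psi))=\sum_{\mu\in F}j(a)(\gamma\mu^{-1})\psi(\mu)$. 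As $F\uparrow G_u$ one has $\chi_F\psi\to\psi$ in $\X_u$ (the tail $\|\sum_{\mu\notin F}\psi(\mu)^{*}\psi(\mu)\|\to 0$), so by continuity of $L_u(a)$ and of $\mathrm{ev}_\gamma$ the partial sums converge in $B_\gamma$ to $(L_u(a)\psi)(\gamma)$; re-indexing $\mu\leftrightarrow\eta=\gamma\mu^{-1}$, a bijection $G_u\to G^{r(\gamma)}$, gives the displayed identity.

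From here the theorem follows. The restriction $W_u(a):=j(a)|_{G_u}$ defines a bounded map $\csrgb\to\X_u$ (the inner product $\langle W_u(a),W_u(a)\rangle=j(a^{*}*a)(u)$ is controlled by the pointwise bound), and a computation on $\gcgb$ shows it intertwines: $W_u(a*b)=L_u(a)W_u(b)$. Taking $\psi=W_u(b)=j(b)|_{G_u}$ in the key identity and evaluating at $\gamma$ then gives
$$j(a*b)(\gamma)=W_u(a*b)(\gamma)=(L_u(a)W_u(b))(\gamma)=\sum_{\eta\in G^{r(\gamma)}}j(a)(\eta)\,j(b)(\eta^{-1}\gamma),$$
with norm convergence, which is \eqref{eq:44}. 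The involution formula \eqref{eq:43} is immediate by continuity from $f^{*}(\gamma)=f(\gamma^{-1})^{*}$ on $\gcgb$. Finally, injectivity is also a corollary of the key identity: if $j(a)=0$ then every coefficient $j(a)(\eta)$ vanishes, so $(L_u(a)\psi)(\gamma)=0$ for all $\psi,\gamma$, whence $L_u(a)=0$ for every $u$ and $\|a\|_r=\sup_u\|L_u(a)\|=0$.

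I expect the main obstacle to be the convolution identity \eqref{eq:44}, precisely the norm convergence of the infinite sum over $G^{r(\gamma)}$. The naive attempt to treat $(j(a)(\eta))_\eta$ and $(j(b)(\eta^{-1}\gamma))_\eta$ as module vectors and invoke Cauchy--Schwarz is obstructed by the Fell-bundle geometry: there is no canonical element over $\gamma$, so ``right translation by $\gamma$'' does not preserve fibers, and the sum is a genuinely off-diagonal coefficient landing in $B_\gamma$ rather than an inner product landing in a fixed \cs-algebra. Routing the convergence through the truncations $\chi_F\psi\to\psi$ in the module norm of $\X_u$---rather than through absolute or fiberwise convergence, which need not hold---is what makes the argument go, and verifying the intertwining $W_u(a*b)=L_u(a)W_u(b)$ together with this module convergence is the delicate point.
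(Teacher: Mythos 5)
Your proposal is correct, and although your construction of $j$, the pointwise bound $\|f(\gamma)\|\le\|f\|_{r}$, the adjoint formula, and the fact that $j(a)|_{G_{u}}$ lies in $\Xu$ all run along the same lines as the paper, your proof of the convolution formula \eqref{eq:44} takes a genuinely different and more elementary route. The paper introduces the auxiliary module $\Wg$ of sections $\eta\mapsto\xi(\eta)\in B_{\eta\gamma}$, identifies it with $\Xu\tensor_{A_{u}}B_{\gamma}$, upgrades it to a $\Ku\sme A_{v}$-imprimitivity bimodule, and realizes $B_{\gamma}$ as $\dX_{u}\tensor_{\Ku}\Wg$ via the explicit isomorphism $\Phi(\dual f\tensor\xi)=\sum_{\eta}f(\eta)^{*}\xi(\eta)$; convergence of the convolution sum is then extracted from the cross-norm inequality for internal tensor products applied to $\dual{\vec x_{F}}\tensor\vec y_{F}\to\dual{\vec x}\tensor\vec y$. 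You instead prove the single operator identity $(V_{u}(a)\psi)(\gamma)=\sum_{\eta\in G^{r(\gamma)}}j(a)(\eta)\,\psi(\eta^{-1}\gamma)$ for $\psi\in\Xu$ (your $L_{u}$ is the paper's $V_{u}$) by truncating $\psi$ to finite subsets of $G_{u}$ and invoking only the boundedness of $V_{u}(a)$ and of evaluation $\Xu\to B_{\gamma}$, and then specialize $\psi=j(b)|_{G_{s(\gamma)}}$ via the intertwining $W_{u}(a*b)=V_{u}(a)W_{u}(b)$. This bypasses the dual module, the imprimitivity-bimodule structure on $\Wg$, and the internal tensor product entirely, yields the same unconditional (net) convergence, and gives a cleaner proof of injectivity than the paper's matrix-coefficient argument. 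Two small points to tighten: the boundedness of $W_{u}$ should be established first on $\gcgb$, where $\|W_{u}(f)\|_{\Xu}^{2}=\|(f^{*}*f)(u)\|\le\|f^{*}*f\|_{\infty}\le\|f^{*}*f\|_{r}\le\|f\|_{r}^{2}$ is a finite-sum computation using the pointwise bound, and then extended by continuity and identified with $j(a)|_{G_{u}}$ through the evaluation maps---quoting $\rip{A_{u}}<W_{u}(a),W_{u}(a)>=j(a^{*}*a)(u)$ for general $a$ presupposes exactly the convergence being proved; and the convergence $f(\gamma)e_{\lambda}\to f(\gamma)$ does not require saturation, since it holds in any right Hilbert $A_{u}$-module because $f(\gamma)^{*}f(\gamma)\in A_{u}$ and $(e_{\lambda})$ is an approximate identity for $A_{u}$.
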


The existence of $j$ is reasonably straightforward as is the adjoint
property \eqref{eq:43}.  However, to be truly useful in applications,
we clearly need to establish the convolution result \eqref{eq:44}.
Unlike the situation of Renault's original result, the convolution
result requires considerable technology.  Renault is able to show that
the sum in \eqref{eq:37} converges absolutely basically using
H\"older's inequality.  Here we have to use the internal tensor
product of Hilbert modules.  As a consequence, we are only able to
show convergence of the sum in \eqref{eq:44} in the strong sense
described in Remark~\ref{rem-sums}.  We suspect that the sum does not
converge absolutely in general.

Our result subsumes earlier results for crossed
  products by discrete groups twisted by a unitary $2$-cocycle in
  \cite{zel:jmpa68}.

\subsubsection*{Assumptions}
\label{sec:assumptions}

Whenever possible we assume that our topological spaces are second
countable and that our \cs-algebras are separable.  In particular, $G$
will always denote a second countable, locally compact Hausdorff
\'etale groupoid.  We also adopt the standard convention that
homomorphisms between \cs-algebras are $*$-preserving.

\section{Preliminaries}
\label{sec:preliminaries}

Fell bundles over groupoids are a natural generalization of Fell's
\cs-algebraic bundles from \cite{fd:representations2}*{Chapter~VIII}.
They were introduced in \cite{yam:xx87}.  For more details, see
\cites{kum:pams98, muhwil:dm08}.  Roughly speaking, a Fell bundle $\B$
over a locally compact Hausdorff groupoid $G$ is a
(upper-semicontinuous) Banach bundle $p \colon \B \to G$ endowed with
a continuous involution $b \mapsto b^*$ and a continuous
multiplication $(a,b) \mapsto ab$ from
$\B^{(2)}=\set{(b,b'):(p(b),p(b')\in G^{(2)}}$ to $\B$ such
that---with respect to the operations, actions, and inner products
induced by the involution and multiplication---the fibres
$B_{u} = p^{-1}(u)$ over units $u \in \go$ are $C^*$-algebras and such
that each fibre $B_{\gamma}=p^{-1}(\gamma)$ is a
$B_{r(\gamma)}$--$B_{s(\gamma)}$-imprimitivity bimodule.

Our references for Banach bundles are \cite{muhwil:dm08}*{Appendix~A}
and, for the \cs-bundle case, \cite{wil:crossed}*{Appendix~C}.  As is
now common practice, we drop the adjective ``upper-semicontinuous'' in
front of the term ``Banach bundle''.  If the map $a\mapsto \|a\|$ is
continuous rather than merely upper-semicontinuous, then we include
the adjective continuous.  An excellent reference for continuous
Banach bundles is \S\S13--14 of \cite{fd:representations1}*{Chap. II}.
If $p \colon \B\to X$ is a Banach bundle, we will write
$\Gamma_{0}(X;\B)$ for the continuous sections of $\B$ which vanish at
infinity.  Furthermore, $\Gamma_{0}(X; \B )$ is a Banach space with
respect to the supremum norm (see \cite{dg:banach}*{p.~10} or
\cite{wil:crossed}*{Proposition~C.23}).

We write $\Gamma_c(G; \B)$ for the $*$-algebra of continuous compactly
supported sections of $\B$ under convolution and involution.  If
$Y\subset G$, then we write $\Gamma_{c}(Y;\B)$ for the continuous
compactly supported sections of $\B$ restricted to $Y$.  Of course
such sections take values in $\B_{Y} \coloneqq p^{-1}(Y)$ which is
usually called the the restriction of $\B$ to $Y$. If $Y$ is closed,
then any $f\in \Gamma_{c}(Y;\B_{Y})$ is the restriction of a section
in $\Gamma_{c}(G;\B)$ to $Y$ by the Tietze Extension Theorem for
Banach bundles \cite{muhwil:dm08}*{Proposition~A.5}.

As in \cite{simwil:nyjm13}, we note that $A=\Gamma_{0}(\go;\B)$ is a
\cs-algebra called the \cs-algebra of $\B$.  If $u\in \go$, then we
often write $A_{u}$ for $B_{u}$ when we want to think of $B_{u}$ as a
\cs-algebra.  Then, for example, $B_{\gamma}$ is a
$A_{r(\gamma)}\sme A_{s(\gamma)}$-\ib.  We rely on
\cite{simwil:nyjm13}*{\S4} for the definition of, and standard results
for, the reduced norm on $\gcgb$. In keeping with our standing
assumptions, we assume that our Banach bundles $p \colon \B\to G$, and
hence our Fell bundles, are separable in the sense that
$\Gamma_{0}(G;\B)$ is a separable Banach space.

  \begin{remark}
    It was observed in \cite{bmz:pems13}*{Lemma~3.16} that any Fell
    bundle over a \emph{group} is necessarily a \emph{continuous}
    Banach bundle.  This is also the case for a Fell bundle $\B$ over
    a groupoid $G$ in the case where the associated \cs-algebra
    $A=\Gamma_{0}(\go;\B)$ is a {\em continuous} \cs-bundle over
    $\go$.
  \end{remark}

  Note that the topology on the total space $\B$ is not necessarily
  Hausdorff when $p \colon \B\to G$ is not a continuous Banach bundle
  (see \cite{wil:crossed}*{Example~C.27}).  However, the following
  observation often allows us to finesse this difficulty.  We include
  a proof for convenience.\footnote{For continuous bundles, this is
    \cite{fd:representations1}*{Proposition~II.13.11}.}

\begin{lemma}\label{lem-phew}
  Let $p \colon \B\to X$ be a Banach bundle over a locally compact
  Hausdorff space $X$.  Then the relative topology on each fibre
  $B_{x}$ is the norm topology.
\end{lemma}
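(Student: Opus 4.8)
The plan is to fix $x\in X$ and compare the two topologies that the fibre $B_{x}=p^{-1}(x)$ carries: the relative topology $\tau_{\mathrm{rel}}$ it inherits as a subspace of $\B$, and the norm topology $\tau_{\|\cdot\|}$ coming from its Banach space structure. I would prove equality by establishing the two inclusions separately, using exactly three of the defining properties of a Banach bundle: upper semicontinuity of $b\mapsto\|b\|$, continuity of fibrewise addition $+\colon\B\times_{X}\B\to\B$, and the axiom that the ``tubes'' $\set{b\in\B:p(b)\in U,\ \|b\|<\epsilon}$, with $U$ an open neighbourhood of $x$ and $\epsilon>0$, form a neighbourhood basis at the zero vector $0_{x}$.

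For $\tau_{\mathrm{rel}}\subseteq\tau_{\|\cdot\|}$ I would show that the inclusion $(B_{x},\tau_{\|\cdot\|})\hookrightarrow\B$ is continuous; since $\tau_{\mathrm{rel}}$ is the coarsest topology making this inclusion continuous and $\tau_{\|\cdot\|}$ is shown to make it continuous, the inclusion of topologies follows. Continuity is checked on nets: if $a_{i}\to a$ in norm in $B_{x}$, then $\|a_{i}-a\|\to 0$ while $p(a_{i}-a)=x\to x$, so the tube axiom forces $a_{i}-a\to 0_{x}$ in $\B$. The pairs $(a_{i}-a,a)$ lie in $\B\times_{X}\B$ and converge there to $(0_{x},a)$, so continuity of addition gives $a_{i}=(a_{i}-a)+a\to 0_{x}+a=a$ in $\B$, as required.

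For the reverse inclusion $\tau_{\|\cdot\|}\subseteq\tau_{\mathrm{rel}}$ I would show that every open norm ball $\set{b\in B_{x}:\|b-a\|<\epsilon}$ is $\tau_{\mathrm{rel}}$-open. First, fibrewise translation $b\mapsto b-a$ is $\tau_{\mathrm{rel}}$-continuous as a self-map of $B_{x}$: the map $b\mapsto(b,-a)$ is continuous from $(B_{x},\tau_{\mathrm{rel}})$ into $\B\times_{X}\B$, its first coordinate being the subspace inclusion and its second constant, and composing with continuous addition gives a map into $B_{x}$ that is $\tau_{\mathrm{rel}}$-continuous. Since $b\mapsto\|b\|$ is upper semicontinuous on $\B$, its restriction to $(B_{x},\tau_{\mathrm{rel}})$ is upper semicontinuous, and precomposing with the continuous translation shows $b\mapsto\|b-a\|$ is upper semicontinuous on $(B_{x},\tau_{\mathrm{rel}})$. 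The sublevel set $\set{b:\|b-a\|<\epsilon}$ of an upper semicontinuous function is therefore $\tau_{\mathrm{rel}}$-open, and these balls generate $\tau_{\|\cdot\|}$.

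The main subtlety, and the reason this lemma is worth isolating from the classical continuous-bundle statement quoted in the footnote, is that we have only \emph{upper} semicontinuity of the norm, since the bundle need not be continuous. This is harmless for the first inclusion but forces the second to be argued through sublevel sets of an upper semicontinuous function, rather than by asserting that the norm itself is $\tau_{\mathrm{rel}}$-continuous; were the norm merely lower semicontinuous the open balls need not be relatively open and the argument would fail. Everything else is routine net-and-axiom bookkeeping, and I note that the hypotheses that $X$ be locally compact Hausdorff are not actually used for this fibrewise statement.
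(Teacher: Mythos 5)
Your proof is correct and follows essentially the same route as the paper's: both directions rest on the same three ingredients (upper semicontinuity of the norm, continuity of fibrewise addition, and the tube neighbourhood basis at $0_{x}$), with your second inclusion merely rephrasing the paper's net argument in terms of sublevel sets of an upper semicontinuous function. Your closing observations---that only upper semicontinuity is needed and that the hypotheses on $X$ play no essential role---are accurate but do not change the substance of the argument.
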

\begin{proof}
  Suppose $(a_{i})$ is a net in $B_{x}$ converging to $a$ in $\B$.
  Since $X$ is Hausdorff and $p$ continuous, $a\in B_{x}$, and
  $(a_{i}-a)$ converges to $0_{x}$ in $\B$.  Since $b\mapsto \|b\|$ is
  upper-semicontinuous, $\set{b:\|b\|<\epsilon}$ is a neighborhood of
  $0_{x}$ in $\B$.  It follows that $\|a_{i}-a\|\to 0$.

  If $(a_{i})\to a$ in norm in $B_{x}$, then $\|a_{i}-a\|\to 0$.  It
  now follows from the Banach bundle axioms (for example axiom~B4 from
  \cite{muhwil:dm08}*{Definition~A.1}), that $a_{i}-a\to 0_{x}$ in
  $\B$.  Then $a_{i}\to a$ in $\B$.
\end{proof}

\begin{remark}
  [Sums] \label{rem-sums} We should explain what we mean by sums such
  as \eqref{eq:44} when $G^{r(\gamma)}$ is infinite.  If
  $f \colon X\to V$ is a function on a set $X$ taking values in a
  topological vector space $V$, then
  \begin{equation}
    \label{eq:62}
    \sum_{x\in X}f(x)
  \end{equation}
  is defined to be the limit, if it exists, of the net $(s_{F})$ where
  $F$ is finite subset of $X$, $s_{F}=\sum_{x\in F}f(x)$, and
  $(s_{F})$ is directed by containment: $F_{1}\le F_{2}$ if
  $F_{1}\subset F_{2}$.  It is not hard to see that if $X$ is
  countably infinite and if $(x_{n})$ is any enumeration of $X$, then
  \begin{equation}
    \label{eq:63}
    \sum_{n=1}^{\infty}f(x_{n})
  \end{equation}
  converges if \eqref{eq:62} does and then the sums coincide.  In
  particular, the sum in \eqref{eq:63} is invariant under
  rearrangement if \eqref{eq:62} converges.
\end{remark}

Note that if for some enumeration of $X$, the sum in \eqref{eq:63}
converges, it is not necessarily the case that \eqref{eq:62}
converges---consider a conditional convergent series in $\R$.  But if
$V$ is a \cs-algebra and if $f(x)$ is always positive in $V$, then the
converse holds.

Recall from \cite{rw:morita}*{pp. 49--50} that if $\X$ is an
$A\sme B$-\ib, then the dual module $\dX$ is the $B\sme A$-\ib\
defined as follows.  We let $\dX$ be the conjugate vector space to
$\X$.  This means that $\dX$ is equal to $\X$ as a set.  If
$\flat \colon \X\to\dX$ is the identity map, then
$\dual{\lambda\cdot x}=\overline\lambda\cdot \dual x$ for
$\lambda\in\C$.  The left $B$-action on $\dX$ is then given by
$b\cdot \dual x = \dual {x\cdot b^{*}}$ and the left $B$-valued inner
product is given by $\lip B<\dual x, \dual y>=\rip B<x,y>$.  Similar
formulas hold for the right $A$-action and right $A$-valued inner
product.

If $\X$ is an $A\sme B$-\ib, and if $\Y$ is a $B\sme C$-\ib, then the
internal tensor product $\X\tensor_{B}\Y$ is a $A\sme C$-\ib\ with
respect to the obvious actions and the inner product given, for
example, in \cite{rw:morita}*{Proposition~3.16}.

\begin{lemma}
  \label{lem-cross-norm} Suppose that $\X$ is a right Hilbert
  $B$-module and that $\Y$ is a $B\sme A$-\ib.  Then if
  $x\tensor y\in \X\tensor_{B}\Y$, then
  \begin{equation}
    \label{eq:60}
    \|x\tensor y\|_{\X\tensor_{B}\Y}\le \|x\|_{\X}\|y\|_{\Y}.
  \end{equation}
\end{lemma}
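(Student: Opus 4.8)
The plan is to reduce the claimed norm bound to a positivity statement about the $A$-valued inner product on $\X\tensor_{B}\Y$, and then to a standard fact about $*$-homomorphisms into adjointable operators. Recall from \cite{rw:morita}*{Proposition~3.16} that the $A$-valued inner product on the internal tensor product is determined on elementary tensors by $\rip A<x\tensor y,x'\tensor y'>=\rip A<y,{\rip B<x,x'>}\cdot y'>$. Since the norm on a right Hilbert module is computed from its inner product, I would start from
\[
\|x\tensor y\|_{\X\tensor_{B}\Y}^{2}=\bigl\|\rip A<x\tensor y,x\tensor y>\bigr\|=\bigl\|\rip A<y,{\rip B<x,x>}\cdot y>\bigr\|,
\]
so that everything comes down to estimating the norm of $\rip A<y,b\cdot y>$, where $b\coloneqq\rip B<x,x>$ is a positive element of $B$ with $\|b\|=\|x\|_{\X}^{2}$.

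The crux is the positivity inequality $\rip A<y,b\cdot y>\le\|b\|\,\rip A<y,y>$ in $A$. To obtain it, I would use that the left action of $B$ on the \ib\ $\Y$ is implemented by a $*$-homomorphism $\phi\colon B\to\L(\Y)$ into the (unital) \cs-algebra of adjointable operators on $\Y$. Being a $*$-homomorphism, $\phi$ is norm decreasing and sends the positive element $b$ to a positive operator, so $0\le\phi(b)\le\|\phi(b)\|\,\id_{\Y}\le\|b\|\,\id_{\Y}$ in $\L(\Y)$. Writing $\|b\|\,\id_{\Y}-\phi(b)=R^{*}R$ for some $R\in\L(\Y)$ and using that $\rip A<y,R^{*}Ry>=\rip A<Ry,Ry>\ge0$, I get
\[
\|b\|\,\rip A<y,y>-\rip A<y,b\cdot y>=\rip A<y,(\|b\|\,\id_{\Y}-\phi(b))y>=\rip A<Ry,Ry>\ge0,
\]
which is the desired inequality in $A$.

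Finally, since $0\le\rip A<y,b\cdot y>\le\|b\|\,\rip A<y,y>$ in the \cs-algebra $A$, monotonicity of the norm on positive elements gives $\|\rip A<y,b\cdot y>\|\le\|b\|\,\|\rip A<y,y>\|=\|x\|_{\X}^{2}\,\|y\|_{\Y}^{2}$. Combining this with the first display and taking square roots yields \eqref{eq:60}. I expect the only genuine subtlety to be the positivity step: one must pass the operator inequality $\phi(b)\le\|b\|\,\id_{\Y}$ in $\L(\Y)$ to the corresponding inequality of $A$-valued inner products, and record carefully that the left action is indeed a $*$-homomorphism into adjointable operators, so that $\phi(b)$ is positive with $\|\phi(b)\|\le\|b\|$.
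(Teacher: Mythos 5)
Your proof is correct, but it takes a genuinely different route from the one in the paper. The paper's proof is a direct application of the Cauchy--Schwarz inequality for Hilbert modules: writing $b=\rip B<x,x>$, it estimates $\|\brip A<b\cdot y,y>\|\le\|b\cdot y\|_{\Y}\,\|y\|_{\Y}\le\|b\|\,\|y\|_{\Y}^{2}$, where the last step uses only that the left action of $B$ on $\Y$ is by bounded operators of norm at most $\|b\|$. You instead establish the stronger \emph{element-wise} inequality $0\le\rip A<y,b\cdot y>\le\|b\|\,\rip A<y,y>$ in $A$, by factoring $\|b\|\,\id_{\Y}-\phi(b)=R^{*}R$ in $\L(\Y)$ and invoking positivity of $\rip A<Ry,Ry>$, and only then pass to norms via monotonicity on positive elements. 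Both arguments ultimately rest on the same input---that the left $B$-action is a $*$-homomorphism $\phi\colon B\to\L(\Y)$, hence contractive and positivity-preserving---but yours buys an order inequality in $A$ (essentially \cite{lan:hilbert}*{Proposition~1.2} specialized to $\phi(b)$) that is strictly more information than the norm bound, at the cost of the extra factorization step; the paper's version is shorter because Cauchy--Schwarz is quoted wholesale from \cite{rw:morita}. All of your steps are justified, including the identification of $\rip A<x\tensor y,x\tensor y>$ with $\rip A<y,{\rip B<x,x>}\cdot y>$ on elementary tensors, which agrees with the form used in the paper since $\rip B<x,x>$ is self-adjoint.
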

\begin{proof}
  We have
  \begin{align}
    \label{eq:61}
    \|x\tensor y\|^{2}_{\X\tensor_{B}\Y}
    &=\| \rip A<x\tensor y, x\tensor y>\| \\
    &= \|\brip A<{\rip B<x,x>}\cdot y,y> \| \\
    \intertext{which, by 
    Cauchy--Schwarz
    (see \cite{rw:morita}*{Lemma
    2.5 and Corollary 2.7}), is}
    &\le \|\rip B<x,x>\cdot y\|_{\Y}\|y\|_{\Y}\\
    &\le \|\rip
      B<x,x>\|\|y\|_{\Y}^{2}=\|x\|_{\X}^{2} \|y\|_{\Y}^{2}.\qedhere
  \end{align}
\end{proof}

\section{The Module $\Xu$}
\label{sec:reduced-norm}

Since we are always assuming that $G$ is \'etale, for any $u\in \go$,
$G_{u}$ is a closed, discrete subset of $G$.  Hence $\gcgub$ is just
the set of finitely supported functions $f$ on $G_{u}$ such that
$f(\eta)\in B_{\eta}$ for each $\eta\in G_{u}$. We let $\Xu$ be the
full right Hilbert $A_{u}$-module that is the completion of
$\Gamma_{c}(G_{u};\B)$ with respect to the pre-inner product
\begin{equation}
  \label{eq:1}
  \Ripu <h,k>=\sum_{\eta\in G_{u}}h(\eta)^{*} \, k(\eta),
\end{equation}
equipped with the obvious right $A_{u}$-action.  As always, denote by
$\L(\Xu)$ the \cs-algebra of adjointable operators on $\Xu$.

\begin{lemma}\label{lem:Vu}
  For each $u\in\go$, there is a homomorphism
  $V_{u} \colon \csrgb\to\L(\Xu)$ such that for all
    $\zeta\in G_{u}$
  \begin{equation}
    \label{eq:2a}
    V_{u}(f)(h)(\zeta)=\sum_{\eta\in
      G^{r(\zeta)}}f(\eta) \, h(\eta^{-1}\zeta)\quad\text{for
      $f\in\Gamma_{c}(G;\B)$ and $h\in\Gamma_{c}(G_{u};\B)$}.  
  \end{equation}
  Furthermore for all $a\in\csrgb$,
  $\|a\|_{r}=\sup_{u\in\go}\|V_{u}(a)\|_{\Xu}$ where $\|\cdot\|_{\Xu}$
  is the norm on $\Xu$.\footnote{Since $\Xu$ is Hilbert
    $A_{u}$-module, its norm, $\|\cdot\|_{\Xu}$, is often denoted by
    $\|\cdot \|_{A_{u}}$.  However, this notation would prove
    confusing in the sequel.}
\end{lemma}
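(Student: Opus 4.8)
The plan is to define $V_{u}$ first on the dense $*$-subalgebra $\gcgb$ by the formula \eqref{eq:2a}, to check by hand that it is a homomorphism into the adjointable operators on the pre-Hilbert module $\gcgub$, and then to identify $(\Xu,V_{u})$ with the representation of $\csrgb$ induced from evaluation at $u$, so that the norm formula becomes a restatement of the description of the reduced norm in \cite{simwil:nyjm13}*{\S4}. First I would confirm that \eqref{eq:2a} makes sense. For $\zeta\in G_{u}$ and $\eta\in G^{r(\zeta)}$ we have $f(\eta)\in B_{\eta}$ and $h(\eta^{-1}\zeta)\in B_{\eta^{-1}\zeta}$, so $f(\eta)\,h(\eta^{-1}\zeta)\in B_{\eta}B_{\eta^{-1}\zeta}\subseteq B_{\zeta}$ and the putative value lies in the correct fibre. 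Because $G$ is \'etale, $G^{r(\zeta)}$ is closed and discrete, hence meets the compact set $\supp f$ in a finite set and the sum is finite. Finally $V_{u}(f)(h)$ is supported in $(\supp f)(\supp h)\cap G_{u}$, which is finite since $\supp h$ is finite and $\supp f\cap G_{r(\xi)}$ is finite for each of the finitely many $\xi\in\supp h$; thus $V_{u}(f)(h)\in\gcgub$.

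Next I would verify the algebraic properties on $\gcgb$, each of which is a rearrangement of finite sums. Right $A_{u}$-linearity is immediate from \eqref{eq:2a}. For multiplicativity I would substitute the convolution formula $(f*g)(\eta)=\sum_{\alpha\in G^{r(\eta)}}f(\alpha)\,g(\alpha^{-1}\eta)$ into \eqref{eq:2a} and reindex the resulting double sum by $\eta=\alpha\beta$; both $V_{u}(f*g)(h)(\zeta)$ and $V_{u}(f)\bigl(V_{u}(g)h\bigr)(\zeta)$ then become $\sum_{\alpha,\eta\in G^{r(\zeta)}}f(\alpha)\,g(\alpha^{-1}\eta)\,h(\eta^{-1}\zeta)$. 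For the adjoint, using $f^{*}(\mu)=f(\mu^{-1})^{*}$ together with the change of variable $\zeta=\eta\xi$ one checks that $\Ripu<V_{u}(f)h,k>=\Ripu<h,V_{u}(f^{*})k>$, so $V_{u}(f)$ is adjointable on the pre-Hilbert module with $V_{u}(f)^{*}=V_{u}(f^{*})$. Hence $V_{u}$ is a homomorphism from $\gcgb$ into the adjointable operators on $\gcgub$.

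It remains to extend $V_{u}$ to $\csrgb$ and to prove the norm formula, and this is where the genuine content lies. The key observation is that $(\Xu,V_{u})$ is exactly the representation $\Ind\epsilon_{u}$ obtained by inducing the evaluation homomorphism $\epsilon_{u}\colon A\to A_{u}$ (where $A=\Gamma_{0}(\go;\B)$) through the regular $\gcgb$--$A$ correspondence: the map $f\tensor a\mapsto\bigl(\eta\mapsto f(\eta)a\bigr)$ identifies $\overline{\gcgb}\tensor_{A}A_{u}$ with $\Xu$ and intertwines the left actions. In particular $V_{u}(f)$ is bounded, extends to an element of $\L(\Xu)$, and satisfies $\|V_{u}(f)\|\le\|f\|_{r}$, so $V_{u}$ extends to a homomorphism on $\csrgb$. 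Since the reduced norm of \cite{simwil:nyjm13}*{\S4} is computed by inducing any faithful representation of $A$, and since the family $\{\epsilon_{u}:u\in\go\}$ is jointly faithful (a section vanishing in every fibre over $\go$ is zero), we obtain $\|a\|_{r}=\sup_{u\in\go}\|V_{u}(a)\|_{\Xu}$. I expect this last step to be the main obstacle: the preceding verifications are bookkeeping with finite sums, whereas here one must correctly match $(\Xu,V_{u})$ with the induced regular representation of \cite{simwil:nyjm13} and exploit faithfulness of the evaluation maps to pin down the norm.
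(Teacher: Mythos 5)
Your proposal is correct and takes essentially the same route as the paper: both identify $\Xu$ with the induction module of Sims--Williams for the subgroupoid $\{u\}$, recognize $V_{u}$ as the corresponding regular representation (which therefore factors through $\csrgb$), and deduce the norm formula from the faithfulness of $\bigoplus_{u}\pi_{u}$ --- equivalently, of the evaluation maps --- on $A=\Gamma_{0}(\go;\B)$. The only difference is that you verify the algebraic identities on $\gcgb$ by hand, which the paper delegates to the citation.
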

\begin{proof}
  Observe that $\Xu$ is the right Hilbert $A_{u}$-module constructed
  in \cite{simwil:nyjm13}*{\S4.1} for the subgroupoid $H=\sset u$;
  that is, $\X_{u}$ is the module used to induce representations from
  $A_{u}$ to $\csgb$.  Moreover, as described in
  \cite{simwil:nyjm13}*{\S4.2}, such induced representations are
  regular representations and factor through $\csrgb$.  Furthermore,
  there is a nondegenerate homomorphism $V \colon \csgb\to \L(\Xu)$
  satisfying \eqref{eq:2a}.  Hence if $\pi_{u}$ is a faithful
  representation of $A_{u}$, then as in
  \cite{simwil:nyjm13}*{Example~13}, the regular representation
  $\indpiu$ acts on the completion of $\gcgub\atensor \H_{\pi_{u}}$
  with respect to the pre-inner product determined on elementary
  tensors by
  \begin{equation}
    \label{eq:64}
    \ip(f\tensor h| g\tensor k)=\sum_{\eta\in G_{u}}
    \bip(\pi_{u}(g(\eta)^{*}f(\eta))h|k) .
  \end{equation}
  In particular, arguing as in the proof of
  \cite{simwil:nyjm13}*{Lemma~9}, we see that $\ker V=\ker(\indpiu)$.
  Therefore $\ker V\supset \set{a\in\csgb:\|a\|_{r}=0}$ and $V$
  factors through a homomorphism $V_{u}$ as claimed.  Since
  $\bigoplus_{u\in\go}\pi_{u}$ is a faithful representation of
  $A=\Gamma_{0}(\go;\B)$, the $V_{u}$ determine the reduced norm as
  claimed.
\end{proof}

\begin{lemma}\label{lem-xu-form}
  We can realize $\X_{u}$ as the right Hilbert $A_{u}$-module
  \begin{equation}
    \label{eq:1a}
    \Bigl\{\,\vec x \in \prod_{\eta\in G_{u}}B_{\eta}:\text{$\sum_{\eta\in
        G_{u}}\vec x(\eta)^{*} \, \vec x(\eta)$ converges in $A_{u}$}\,\Bigr\}
  \end{equation}
  equipped with the inner product and right $A_{u}$-action given by
  \begin{equation}
    \label{eq:3}
    \Ripu <\vec x,\vec y>=\sum_{\eta\in G_{u}}\vec x(\eta)^{*} \, \vec
    y(\eta)\quad\text{and} \quad(\vec x\cdot a)(\eta)=\vec
    x(\eta)\cdot a.
    \quad 
  \end{equation}
\end{lemma}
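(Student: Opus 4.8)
The plan is to recognise the right-hand side of \eqref{eq:1a} as the Hilbert-module direct sum $\bigoplus_{\eta\in G_{u}}B_{\eta}$ of the fibres, each $B_{\eta}$ (for $\eta\in G_{u}$) being a right Hilbert $A_{u}$-module via its imprimitivity-bimodule structure, so that $\Ripu<b,c>=b^{*}c$ for $b,c\in B_{\eta}$. Write $M$ for the set in \eqref{eq:1a} equipped with the operations \eqref{eq:3}. The strategy is to prove that $M$ is a Hilbert $A_{u}$-module in which the finitely supported tuples are dense, and then to observe that $\gcgub$ is exactly this dense set: indeed, a finitely supported section is an element of $\gcgub$ by the discussion at the start of this section, and on such sections \eqref{eq:1} is literally the restriction of \eqref{eq:3}. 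Thus the inclusion $\gcgub\hookrightarrow M$ preserves the $A_{u}$-valued inner products; since $M$ is complete it extends uniquely to an $A_{u}$-linear map $\Phi\colon\Xu\to M$ preserving inner products, and hence isometric. Its image is closed and contains the dense set $\gcgub$, so $\Phi$ is the desired isomorphism of Hilbert $A_{u}$-modules.

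First I would verify that $M$ is well defined as an inner-product module, the one delicate point being that the sum defining $\Ripu<\vec x,\vec y>$ converges for arbitrary $\vec x,\vec y\in M$, not merely when $\vec x=\vec y$. For a finite $F\subset G_{u}$, the tuple $(\vec x(\eta))_{\eta\in F}$ lies in the finite direct sum $\bigoplus_{\eta\in F}B_{\eta}$, and the Cauchy--Schwarz inequality for Hilbert modules (\cite{rw:morita}*{Lemma~2.5 and Corollary~2.7}) yields
\begin{equation*}
  \Bigl\|\sum_{\eta\in F}\vec x(\eta)^{*}\,\vec y(\eta)\Bigr\|
  \le\Bigl\|\sum_{\eta\in F}\vec x(\eta)^{*}\,\vec x(\eta)\Bigr\|^{1/2}
  \Bigl\|\sum_{\eta\in F}\vec y(\eta)^{*}\,\vec y(\eta)\Bigr\|^{1/2}.
\end{equation*}
Applying this with $F$ replaced by $F'\setminus F$ for finite $F\subset F'$, and using that the diagonal sums converge by the defining condition in \eqref{eq:1a}, shows that the partial sums of $\sum_{\eta}\vec x(\eta)^{*}\,\vec y(\eta)$ form a Cauchy net in $A_{u}$; hence $\Ripu<\vec x,\vec y>$ converges. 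The remaining module axioms are then routine, positivity of $\Ripu<\vec x,\vec x>$ being inherited fibrewise.

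For completeness I would exploit that evaluation at $\eta$ is norm-decreasing, namely $\|\vec x(\eta)\|^{2}=\|\vec x(\eta)^{*}\vec x(\eta)\|\le\|\Ripu<\vec x,\vec x>\|=\|\vec x\|_{M}^{2}$: a Cauchy sequence in $M$ converges pointwise in each $B_{\eta}$, and the usual $\ell^{2}$-type tail estimate shows that the pointwise limit lies in $M$ and is the $M$-limit. Density of the finitely supported tuples is then immediate, since convergence of $\sum_{\eta}\vec x(\eta)^{*}\vec x(\eta)$ lets us choose a finite $F$ with $\bigl\|\sum_{\eta\notin F}\vec x(\eta)^{*}\vec x(\eta)\bigr\|<\epsilon^{2}$, whence the truncation of $\vec x$ to $F$ lies within $\epsilon$ of $\vec x$ in $M$. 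The only step demanding real care is the convergence argument for the off-diagonal inner-product sums; the rest is the standard verification that a direct sum of Hilbert modules is complete and contains the algebraic direct sum densely, so I anticipate no essential obstacle.
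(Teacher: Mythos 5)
Your proposal is correct and follows essentially the same route as the paper: both identify the set \eqref{eq:1a} as the Hilbert-module direct sum of the fibres $B_{\eta}$ and show that the finitely supported tuples --- which are exactly $\gcgub$ since $G_{u}$ is discrete --- are dense via the tail estimate $\|\vec x-\vec x_{F}\|^{2}=\|\Ripu<\vec x,\vec x>-\Ripu<\vec x_{F},\vec x_{F}>\|$. The only difference is that you verify the Hilbert-module axioms (convergence of the off-diagonal inner-product sums, completeness) by hand, whereas the paper delegates this to \cite{rw:morita}*{Proposition~2.15} and \cite{lan:hilbert}*{p.~6}.
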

\begin{remark}
  \label{rem-sum-conv} Part of the conclusion of
  Lemma~\ref{lem-xu-form} is that the sum in \eqref{eq:3} converges in
  $A_{u}$ as in Remark~\ref{rem-sums}.
\end{remark}
\begin{proof}
  Arguing as in \cite{rw:morita}*{Proposition~2.15}, or more
  generally, \cite{lan:hilbert}*{p.~6}, \eqref{eq:1a} determines a
  right Hilbert $A_{u}$-module.  Let $\|\cdot\|_{0}$ be the
  corresponding Hilbert $A_{u}$-module norm induced by the inner
  product given in \eqref{eq:3}.  If $\vec x$ belongs to \eqref{eq:1a}
  and if $F$ is a finite subset of $G_{u}$, then we let
  $\vec x_{F}=\charfcn F \vec x$. We claim that the net $(\vec x_{F})$
  converges to $\vec x$.  In fact,
  \begin{align}
    \|\vec x -\vec x_{F}\|_{0}^{2}
    &=\|\rip A_{u}<\vec x-\vec x_{F},\vec x -\vec x_{F}>\| \\
    &= \| \rip A_{u}<\vec x,\vec x> - \rip A_{u}<\vec x,\vec x_{F}> -
      \rip A_{u}<\vec x_{F},\vec x> + \rip A_{u}<\vec x_{F},\vec
      x_{F}>\| \\
    &=\|\rip A_{u}<\vec x,\vec x> -\rip A_{u}<\vec x_{F},\vec x_{F}>\|,
      \label{eq:59a}
  \end{align}
  where the last equality follows from the fact that
  \begin{equation}
    \label{eq:2}
    \rip A_{u}<\vec x,\vec x_{F}>=
    \rip A_{u}<\vec x_{F},\vec x> = \rip A_{u}<\vec x_{F},\vec
    x_{F}>.
  \end{equation}
  Clearly, \eqref{eq:59a} tends to zero with $F$.  Hence, we can view
  $\gcgub$ as a dense subspace.  The result follows from this.
\end{proof}

\begin{notation}
  \label{rem-dense}
  If $\gamma\in G_{u}$ and $b\in B_{\gamma}$, we let $h_{\gamma}^{b}$
  be the section that takes the value $b$ at $\gamma$ and is zero
  elsewhere. Since $G_{u}$ is discrete, $h_{\gamma}^b$ is an element
  of $\gcgub$, and elements of this form span a dense subspace of
  $\Xu$.  Note that
  \begin{equation}
    \label{eq:8a}
    \|h_{\gamma}^{b}\|_{\Xu}=\|b\|.
  \end{equation}
\end{notation}

\begin{prop}
  \label{prop-j-inj} There is an
  \emph{injective} 
  norm reducing linear map $j \colon \csrgb\to \Gamma_{0}(G;\B)$ such
  that $j(f)(\gamma)=f(\gamma)$ for all $f\in\gcgb$.
\end{prop}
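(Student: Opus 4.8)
The plan is to build $j$ by extending the inclusion $\gcgb\hookrightarrow\Gamma_{0}(G;\B)$ by continuity, using the modules $\Xu$ and the homomorphisms $V_{u}$ of Lemma~\ref{lem:Vu} to dominate the supremum norm by the reduced norm. The computation that drives everything is an evaluation formula. Fix $u\in\go$, let $\alpha,\beta\in G_{u}$, and take $b\in B_{\alpha}$. Since $h_{\alpha}^{b}$ is supported at $\alpha$, I expect that in the sum \eqref{eq:2a} defining $V_{u}(f)(h_{\alpha}^{b})(\beta)$ the only surviving term is the one with $\eta=\beta\alpha^{-1}$, giving
\[
  V_{u}(f)(h_{\alpha}^{b})(\beta)=f(\beta\alpha^{-1})\,b \qquad\text{for } f\in\gcgb.
\]
Specializing to $\alpha=u$ yields $V_{u}(f)(h_{u}^{b})(\beta)=f(\beta)\,b$ for $b\in B_{u}$, so that for $\gamma\in G$ with $u=s(\gamma)$ we recover $f(\gamma)\,b=V_{u}(f)(h_{u}^{b})(\gamma)$.

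Next I would establish the norm estimate. Realizing $\Xu$ as in Lemma~\ref{lem-xu-form}, for any $\vec x\in\Xu$ and $\beta\in G_{u}$ the difference $\Ripu<\vec x,\vec x>-\vec x(\beta)^{*}\vec x(\beta)$ is a convergent sum of positive terms, so $\vec x(\beta)^{*}\vec x(\beta)\le\Ripu<\vec x,\vec x>$ in $A_{u}$ and hence $\|\vec x(\beta)\|\le\|\vec x\|_{\Xu}$; that is, point-evaluation is norm reducing. Combining this with $\|V_{u}(f)\|_{\L(\Xu)}\le\|f\|_{r}$ and $\|h_{u}^{b}\|_{\Xu}=\|b\|$ from \eqref{eq:8a}, the evaluation formula gives $\|f(\gamma)\,b\|\le\|f\|_{r}\,\|b\|$ for every $b\in B_{u}$. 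Since $B_{\gamma}$ is a right Hilbert $B_{u}$-module, its norm satisfies $\|c\|=\sup_{\|b\|\le1}\|c\,b\|$ (apply an approximate identity of $A_{u}$), so $\|f(\gamma)\|\le\|f\|_{r}$ and therefore $\|f\|_{\infty}\le\|f\|_{r}$ for all $f\in\gcgb$.

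Because $\gcgb$ is dense in $\csrgb$ and $\Gamma_{0}(G;\B)$ is complete under the supremum norm, the norm-reducing inclusion then extends uniquely to a norm-reducing linear map $j\colon\csrgb\to\Gamma_{0}(G;\B)$ that restricts to the identity on $\gcgb$; pointwise, $j(a)(\gamma)=\lim_{n}f_{n}(\gamma)$ for any sequence $f_{n}\to a$ in $\gcgb$. For injectivity I would pass the evaluation formula to the limit: as $V_{u}$ is contractive and point-evaluation is norm reducing, the identity $V_{u}(a)(h_{\alpha}^{b})(\beta)=j(a)(\beta\alpha^{-1})\,b$ holds for every $a\in\csrgb$. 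If $j(a)=0$, then all fibre values of $V_{u}(a)(h_{\alpha}^{b})$ vanish, so $V_{u}(a)(h_{\alpha}^{b})=0$ for all $\alpha\in G_{u}$ and $b\in B_{\alpha}$; since these elements span a dense subspace of $\Xu$ by Notation~\ref{rem-dense}, we get $V_{u}(a)=0$ for every $u$, whence $\|a\|_{r}=\sup_{u\in\go}\|V_{u}(a)\|=0$ and $a=0$.

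I expect most of this to be routine bookkeeping. The one genuinely substantive point is the second step: converting the module-action bound $\|f(\gamma)\,b\|\le\|f\|_{r}\|b\|$ into the honest section bound $\|f(\gamma)\|\le\|f\|_{r}$, which hinges on recovering the Hilbert-module norm of $B_{\gamma}$ from its right $B_{s(\gamma)}$-action, together with the fibrewise inequality $\|\vec x(\beta)\|\le\|\vec x\|_{\Xu}$ that makes point-evaluation continuous and hence lets both the evaluation formula and the norm estimate survive the passage to the limit.
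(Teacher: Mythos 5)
Your proposal is correct and follows essentially the same route as the paper: both hinge on the formula $V_{u}(f)(h_{\alpha}^{b})(\beta)=f(\beta\alpha^{-1})\,b$ (the paper packages it as $\Ripu<h_{\gamma}^{b},V_{u}(f)h_{\beta}^{c}>=b^{*}f(\gamma\beta^{-1})c$), the approximate-identity trick to upgrade $\|f(\gamma)b\|\le\|f\|_{r}\|b\|$ to $\|f(\gamma)\|\le\|f\|_{r}$, extension by continuity, and density of the $h_{\alpha}^{b}$ for injectivity. The only cosmetic difference is that you bound point evaluation on $\Xu$ directly and argue injectivity contrapositively via $V_{u}(a)=0$, whereas the paper runs Cauchy--Schwarz on the inner products and exhibits a nonvanishing matrix coefficient; both are sound.
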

\begin{proof}
  Since
  \begin{align}
    \label{eq:4a}
    \Ripu<h,V_{u}(f)k>
    = \sum_{\zeta\in G_{u}} h(\zeta)^{*} \, 
    \bigl[V_{u}(f)k\bigr]
    (\zeta) 
    = \sum_{\zeta\in G_{u}} \sum_{\eta\in G^{r(\zeta)}} h(\zeta)^{*} \, 
    f(\eta)  \, k(\eta^{-1}\zeta),
  \end{align}
  we have
  \begin{align}
    \label{eq:6a}
    \Ripu<h_{\gamma}^{b},V_{u}(f) h_{\beta}^{c}>
    =b^{*}f(\gamma\beta^{-1})c\quad\text{for all $\gamma,\beta\in G_{u}$
    and $f\in \gcgb$.}
  \end{align}
  In particular, if $(e_{n})$ is an approximate identity for $A_{u}$
  \begin{align}
    \label{eq:7a}
    \Ripu<h_{\gamma}^{f(\gamma)},V_{u}(f) h_{u}^{e_{n}}>
    =f(\gamma)^{*}f(\gamma)e_{n}. 
  \end{align}
  Thus, by Cauchy--Schwarz,
  \begin{align}
    \label{eq:9a}
    \|f(\gamma)^{*}f(\gamma)e_{n}\|
    &=
      \|\Ripu<h_{\gamma}^{f(\gamma)},V_{u}(f) h_{u}^{e_{n}}>\|
      \le
      \|h_{\gamma}^{f(\gamma)}\|\|V_{u}(f)\|\|h_{u}^{e_{n}}\|\\
    \intertext{which, by \eqref{eq:8a} and since $\|e_n\|\leq 1$, is}
    &
      \le
      \|f(\gamma)\|_{B_{\gamma}}\| \|V_{u}(f)\|
    \\
    &\le
      \|f(\gamma)\|_{B_{\gamma}}\|f\|_{r}.
  \end{align}
  On the other hand,
  $f(\gamma)^{*}f(\gamma)e_{n} \to f(\gamma)^{*}f(\gamma)$ in $A_{u}$.
  Therefore
  \begin{equation}
    \label{eq:10}
    \lim_{n}\|f(\gamma)^{*}f(\gamma)e_{n}\|_{A_{u}}
    =\|f(\gamma)^{*}f(\gamma)\|_{A_{u}}=\|f(\gamma)\|^{2}_{B_{\gamma}}.  
  \end{equation}
  It follows that for any $\gamma \in G_{u}$,
  \begin{equation}
    \label{eq:11}
    \|f(\gamma)\|_{B_{\gamma}}\le\|f\|_{r}.
  \end{equation}
  Since $u\in\go$ was arbitrary, we have
  \begin{equation}
    \label{eq:12}
    \|f\|_{\infty}\le \|f\|_{r}\quad\text{for any $f\in\gcgb$.}
  \end{equation}

  It follows that the map sending $f\in \gcgb\subset \csrgb$ to
  $f\in \Gamma_{0}(G;\B)$ is a bounded linear map.  Hence we can
  extend to the completion and obtain a linear map
  \begin{align}
    \label{eq:13}
    j \colon \csrgb\to \Gamma_{0}(G;\B)
  \end{align}
  such that $\|j(f)\|_{\infty}\le \|f\|_{r}$ and such that $j(f)=f$
  for all $f\in\gcgb$.

  To see that $j$ is injective, it suffices to see that if
  $a\in\csrgb$ and $a\not=0$, then $j(a)\not=0$.  But if $a\not=0$,
  then by Lemma~\ref{lem:Vu}, there is a $u\in\go$ such that
  $V_{u}(a)\not=0$.  Since elements of the form $h_{\gamma}^{b}$ with
  $\gamma\in G_{u}$ and $b\in B_{\gamma}$ span a dense subspace of
  $\X_{u}$, there must be vectors $h_{\gamma}^{b}$ and $h_{\beta}^{c}$
  in $\Gamma_{c}(G_{u};\B)\subset \X_{u}$ such that
  \begin{equation}
    \label{eq:14}
    \Ripu<h_{\gamma}^{b},V_{u}(a)h_{\beta}^{c}>\not=0.
  \end{equation}
  Let $(f_{n})$ be a sequence in $\gcgb$ converging to $a$ in
  $\csrgb$.  Since $j$ is bounded, $j(f_{n})\to j(a)$ in
  $\Gamma_{0}(G;\B)$.  In particular,
  \begin{equation}
    \label{eq:17}
    j(a)(\gamma\beta^{-1})=\lim_{n}j(f_{n})(\gamma\beta^{-1}).
  \end{equation}
  Since multiplication in $\B$ is continuous and associative, this
  implies
  \begin{equation}\label{eq:17b}
    b^{*} j(a)(\gamma\beta^{-1})c=\lim_{n} b^{*}
    j(f_{n})(\gamma\beta^{-1})c. 
  \end{equation}
  On the other hand, by \eqref{eq:6a},
  \begin{equation}
    \label{eq:15}
    b^{*}j(f_{n})(\gamma\beta^{-1})c
    =
    b^{*}f_{n}(\gamma\beta^{-1})c
    =
    \Ripu<h_{\gamma}^{b},V_{u}(f_{n}) h_{\beta}^{c}>.
  \end{equation}
  Since $V_{u}$ is bounded,
  \begin{equation}
    \label{eq:16}\lim_{n}
    \Ripu<h_{\gamma}^{b},V_{u}(f_{n}) h_{\beta}^{c}>=
    \Ripu<h_{\gamma}^{b},V_{u}(a) h_{\beta}^{c}>. 
  \end{equation}
  We have shown that $b^{*} j(f_{n})(\gamma\beta^{-1})c$ converges to
  both $b^{*} j(a)(\gamma\beta^{-1})c$ and
  $\Ripu<h_{\gamma}^{b},V_{u}(a) h_{\beta}^{c}>$ in a fibre of
  $\B$. Since those fibres are Hausdorff by Lemma~\ref{lem-phew}, we
  conclude
  \begin{equation}
    \label{eq:18}
    \Ripu<h_{\gamma}^{b},V_{u}(a)h_{\beta}^{c}>=b^{*}j(a)(\gamma\beta^{-1})c.
  \end{equation}
  
  Since the left-hand side of \eqref{eq:18} is nonzero by assumption,
  it follows that $j(a)(\gamma\beta^{-1})\not=0$.  Thus, $j$ is
  injective.
\end{proof}

\begin{cor}
  \label{cor-j-star} If $a\in \csrgb$, then
  $j(a^{*})(\eta)=j(a)(\eta^{-1})^{*}$.
\end{cor}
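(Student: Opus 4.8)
The plan is to verify the identity first on the dense $*$-subalgebra $\gcgb\subset\csrgb$ and then to extend to all of $\csrgb$ by continuity, exactly as in the proof of injectivity in Proposition~\ref{prop-j-inj}. Recall that the involution on $\gcgb$ is given fibrewise by $f^{*}(\eta)=f(\eta^{-1})^{*}$, where $b\mapsto b^{*}$ is the involution of the Fell bundle carrying $B_{\eta^{-1}}$ to $B_{\eta}$. Since $j$ restricts to the identity inclusion on $\gcgb$, for $f\in\gcgb$ we then have immediately
\begin{equation}
  j(f^{*})(\eta)=f^{*}(\eta)=f(\eta^{-1})^{*}=j(f)(\eta^{-1})^{*},
\end{equation}
which is the desired formula on the dense subalgebra.

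To pass to the general case, fix $a\in\csrgb$ and choose a sequence $(f_{n})$ in $\gcgb$ with $f_{n}\to a$ in $\csrgb$. Since $\csrgb$ is a \cs-algebra, its involution is isometric, so $f_{n}^{*}\to a^{*}$ as well. Because $j$ is norm reducing by Proposition~\ref{prop-j-inj}, both $j(f_{n})\to j(a)$ and $j(f_{n}^{*})\to j(a^{*})$ in $\Gamma_{0}(G;\B)$ in the supremum norm. Evaluating sections at a fixed point and using that supremum-norm convergence forces convergence in each fibre, I obtain $j(f_{n})(\eta^{-1})\to j(a)(\eta^{-1})$ in $B_{\eta^{-1}}$ and $j(f_{n}^{*})(\eta)\to j(a^{*})(\eta)$ in $B_{\eta}$.

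Finally, I would invoke continuity of the bundle involution together with Lemma~\ref{lem-phew}: its restriction is the isometry $B_{\eta^{-1}}\to B_{\eta}$, $b\mapsto b^{*}$, so $j(f_{n})(\eta^{-1})^{*}\to j(a)(\eta^{-1})^{*}$ in $B_{\eta}$. Combining this with the identity already established on $\gcgb$ gives $j(f_{n}^{*})(\eta)=j(f_{n})(\eta^{-1})^{*}$ for every $n$, and passing to the limit yields $j(a^{*})(\eta)=j(a)(\eta^{-1})^{*}$, which is \eqref{eq:43}. The only point requiring any care is the uniqueness of these fibre limits, which holds because, by Lemma~\ref{lem-phew}, each fibre $B_{\eta}$ carries the norm topology and is therefore Hausdorff. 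There is otherwise no genuine obstacle here, consistent with the paper's own remark that the adjoint property is straightforward; all the real work is reserved for the convolution identity \eqref{eq:44}.
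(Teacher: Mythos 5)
Your proof is correct and follows essentially the same route as the paper's: verify the identity on the dense subalgebra $\gcgb$, pass to a limit using that $j$ is norm reducing and the involutions (on $\csrgb$ and fibrewise on $\B$) are isometric, and conclude by uniqueness of limits in the Hausdorff fibre $B_{\eta}$ via Lemma~\ref{lem-phew}. The only difference is that you spell out the fibrewise continuity of the bundle involution a bit more explicitly than the paper does.
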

\begin{proof}
  Let $(f_{n})$ be a sequence in $\gcgb$ such that $f_{n}\to a$ in
  $\csrgb$.  Then for all $\eta\in G$,
  $f_{n}(\eta)=j(f_{n})(\eta)\to j(a)(\eta)$ and
  $f^{*}_{n}(\eta)=j(f_{n}^{*})(\eta)\to j(a^{*})(\eta)$.  But by
  definition, $f^{*}_{n}(\eta)=f_{n}(\eta^{-1})^{*}$.  Hence
  $(f_{n}(\eta^{-1})^{*})$ converges to both $j(a)(\eta^{-1})^{*}$ and
  $j(a^{*})(\eta)$ in $B_{\eta}$.  Since the relative topology on
  $B_{\eta}$ is the norm topology, it is Hausdorff, and
  $j(a^{*})(\eta) = j(a)(\eta^{-1})^{*}$ as required.
\end{proof}

If $a\in\csrgb$, then for any $u\in\go$, the restriction of $j(a)$ to
$G_{u}$ defines an element $\ju(a)$ of
$\prod_{\eta\in G_{u}}B_{\eta}$.  In particular, we have the
following.

\begin{prop}
  \label{prop-ja-xu} If $a\in\csrgb$ and $u\in\go$, then
  $\ju(a)\in \Xu$ and
  \begin{equation}
    \label{eq:4}
    \|\ju(a)\|_{\Xu}\le \|a\|_{r}.
  \end{equation}
\end{prop}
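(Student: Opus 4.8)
The plan is to prove the estimate first on the dense $*$-algebra $\gcgb$, where $\ju(f)$ is manifestly an element of $\Xu$, and then pass to the completion. For $f\in\gcgb$ the restriction $\ju(f)$ of $f$ to $G_{u}$ is finitely supported (since $\supp f$ is compact and $G_{u}$ is discrete and closed), so $\ju(f)\in\gcgub\subset\Xu$. The key observation is that $\ju(f)$ is recovered by applying $V_{u}(f)$ to approximate-identity vectors. If $(e_{n})$ is an approximate identity for $A_{u}$, then from \eqref{eq:2a} the only surviving term in $V_{u}(f)(h_{u}^{e_{n}})(\zeta)=\sum_{\eta\in G^{r(\zeta)}}f(\eta)\,h_{u}^{e_{n}}(\eta^{-1}\zeta)$ for $\zeta\in G_{u}$ is the one with $\eta=\zeta$ (because $\eta^{-1}\zeta=u$ forces $\eta=\zeta$), giving $V_{u}(f)(h_{u}^{e_{n}})(\zeta)=f(\zeta)e_{n}$. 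Since $\ju(f)$ is finitely supported and $f(\zeta)e_{n}\to f(\zeta)$, it follows that $V_{u}(f)(h_{u}^{e_{n}})\to\ju(f)$ in $\Xu$. Combining this with $\|h_{u}^{e_{n}}\|_{\Xu}=\|e_{n}\|\le1$ (by \eqref{eq:8a}) and the operator bound $\|V_{u}(f)\|\le\|f\|_{r}$ from Lemma~\ref{lem:Vu}, I obtain
\[
\|\ju(f)\|_{\Xu}=\lim_{n}\|V_{u}(f)(h_{u}^{e_{n}})\|_{\Xu}\le\|V_{u}(f)\|\,\|h_{u}^{e_{n}}\|_{\Xu}\le\|f\|_{r}.
\]

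Next, for an arbitrary $a\in\csrgb$, I would choose a sequence $(f_{n})\subset\gcgb$ with $f_{n}\to a$ in $\csrgb$. Applying the estimate just proved to $f_{n}-f_{m}\in\gcgb$ gives $\|\ju(f_{n})-\ju(f_{m})\|_{\Xu}=\|\ju(f_{n}-f_{m})\|_{\Xu}\le\|f_{n}-f_{m}\|_{r}$, so $(\ju(f_{n}))$ is Cauchy in the complete module $\Xu$ and converges to some $\xi\in\Xu$. Continuity of the norm then yields $\|\xi\|_{\Xu}=\lim_{n}\|\ju(f_{n})\|_{\Xu}\le\lim_{n}\|f_{n}\|_{r}=\|a\|_{r}$.

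It remains to identify $\xi$ with $\ju(a)$. Here I would use that evaluation at a fixed $\eta\in G_{u}$ is norm-decreasing from $\Xu$ to $B_{\eta}$: writing the inner product as in \eqref{eq:3}, the sum $\Ripu<\vec x,\vec x>=\sum_{\zeta\in G_{u}}\vec x(\zeta)^{*}\,\vec x(\zeta)$ is a convergent sum of positive elements of $A_{u}$, so $\vec x(\eta)^{*}\vec x(\eta)\le\Ripu<\vec x,\vec x>$ and hence $\|\vec x(\eta)\|\le\|\vec x\|_{\Xu}$. Consequently $\ju(f_{n})(\eta)\to\xi(\eta)$ in $B_{\eta}$. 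On the other hand $\ju(f_{n})(\eta)=f_{n}(\eta)=j(f_{n})(\eta)\to j(a)(\eta)$, because $j$ is norm reducing so $\|j(f_{n})-j(a)\|_{\infty}\le\|f_{n}-a\|_{r}\to0$. Since the fibre $B_{\eta}$ is Hausdorff by Lemma~\ref{lem-phew}, the two limits coincide, so $\xi(\eta)=j(a)(\eta)=\ju(a)(\eta)$ for every $\eta\in G_{u}$, i.e. $\xi=\ju(a)$. This establishes both $\ju(a)\in\Xu$ and the bound $\|\ju(a)\|_{\Xu}\le\|a\|_{r}$.

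The main obstacle is the first step: recognizing that the pointwise restriction $\ju(f)$, a priori unrelated to the operator $V_{u}(f)$ that computes the reduced norm, is recovered as the limit $\lim_{n}V_{u}(f)\,h_{u}^{e_{n}}$. This approximate-identity identity is precisely what transfers the operator bound $\|V_{u}(f)\|\le\|f\|_{r}$ into control of $\|\ju(f)\|_{\Xu}$; once it is in place, the remainder is a routine completeness-and-continuity argument, the only technical care being the continuity of coordinate evaluation on $\Xu$ and the Hausdorffness of the fibres.
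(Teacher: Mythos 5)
Your proposal is correct and follows essentially the same route as the paper: both establish the bound on $\gcgb$ by applying $V_{u}(f)$ to the approximate-identity vectors $h_{u}^{e_{n}}$ and invoking $\|V_{u}(f)\|\le\|f\|_{r}$, and both then pass to general $a$ via a Cauchy-sequence argument, continuity of evaluation at $\eta\in G_{u}$, and Hausdorffness of the fibres. The only (harmless) difference is that you deduce the dense-case bound from the convergence $V_{u}(f)h_{u}^{e_{n}}\to \ju(f)$ in $\Xu$, whereas the paper computes $\|V_{u}(f)h_{u}^{e_{n}}\|^{2}_{\Xu}=\|e_{n}(\sum_{\eta}f(\eta)^{*}f(\eta))e_{n}\|_{A_{u}}$ directly and lets $n\to\infty$.
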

\begin{proof}
  Suppose that $a=f\in\gcgb$.  Then if $(e_{n})$ is an approximate
  identity for $A_{u}$, we have
  \begin{equation}
    \label{eq:6}
    \bigl[V_{u}(f)h^{e_{n}}_{u}\bigr](\gamma)=f(\gamma) e_{n}.
  \end{equation}
  Thus, on the one hand,
  \begin{equation}
    \label{eq:26}
    \|V_{u}(f) h_{u}^{e_n}\|^{2}_{\Xu} \le \|f\|_{r}^{2}\|h_{u}^{e_{n}}
    \|_{\Xu}^{2}\le 
    \|f\|_{r}^{2}. 
  \end{equation}
  On the other hand,
  \begin{align}
    \|V_{u}(f) h_{u}^{e_n}\|^{2}_{\Xu}
    &=\|\Ripu<V_{u}(f)h_{u}^{e_{n}},V_{u}(f) h_{u}^{e_n}> \|_{A_{u}}\\
    &=\Bigl\|\sum_{\eta\in G_{u}}e_{n} \, f(\eta)^{*} \, f(\eta) \,  e_{n}
      \Bigr\|_{A_{u}}=\Bigl\| e_{n}\Bigl( 
      \sum_{\eta\in G_{u}}f(\eta)^{*} \, f(\eta)\Bigr)e_{n}\Bigr\|_{A_{u}}.
      \label{eq:27}
  \end{align}
  Note that as $f$ is compactly supported, only finitely many of the
  summands in each sum in \eqref{eq:27} are nonzero, so each sum
  converges to an element of $A_{u}$.  Thus, for all $n$,
  \begin{equation}
    \label{eq:28}
    \Bigl \| e_{n}\Bigl( \sum_{\eta\in G_{u}}f(\eta)^{*} \, f(\eta)
    \Bigr)e_{n} 
    \Bigr \|_{A_{u}}\le \|f\|_{r}^{2}.
  \end{equation}
  We conclude that for all $f\in \gcgb$,
  \begin{equation}
    \label{eq:29}
    \|f\|_{\Xu}^{2}=\Bigl \| \sum_{\eta\in G_{u}}f(\eta)^{*} \, f(\eta)
    \Bigr \|_{A_{u}}\le \|f\|_{r}^{2}.
  \end{equation}
  Thus, \eqref{eq:4} holds provided $a=f\in \gcgb$.

  Suppose that $(f_{n})$ is a sequence in $\gcgb$ such that
  $f_{n}\to a$ in $\csrgb$.  Since
  \begin{equation}
    \label{eq:71}
    \|j_{u}(f_{n})-j_{u}(f_{m})\|_{\Xu}=\|j_{u}(f_{n}-f_{m})\|_{\Xu} \le
    \|f_{n}-f_{m}\|_{r} 
  \end{equation}
  by \eqref{eq:29}, it follows that $\bigl(j_{u}(f_{n})\bigr)$ is
  Cauchy in $\Xu$.  Since $\Xu$ is complete,
  $\bigl(j_{u}(f_{n})\bigr)$ converges to some $\vec y\in\Xu$.  Since
  the map $\vec x \mapsto \vec x(\eta)$ is clearly a bounded linear
  map from $\Xu$ to $B_{\eta}$, ``evaluation at $\eta$'' is continuous
  on $\Xu$.  Hence $j_{u}(f_{n})(\eta)\to \vec y(\eta)$ in $B_{\eta}$
  for all $\eta\in G_{u}$. Further, as $j$ is continuous, we have
  $j(f_{n})\to j(a)$ uniformly.  Since $B_{\eta}$ is Hausdorff, we
  conclude that $\vec y=j_{u}(a)$.  Hence, $j_{u}(a)\in\Xu$ and
  $\|j_{u}(a)\|_{\Xu}= \lim_{n}\|j_{u}(f_{n})\|_{\Xu}\le
  \lim_{n}\|f_{n}\|_{r}= \|a\|_{r}$.
\end{proof}

As a corollary of the proof, we have the following.

\begin{cor}
  \label{cor-j-xu} Suppose that $(f_{n}) \subset \gcgb$ converges to
  $a\in \csrgb$.  Then $(\ju(f_{n}))$ converges to $\ju(a)$ in $\Xu$.
\end{cor}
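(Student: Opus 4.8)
The plan is to harvest this statement directly from the argument already carried out in the proof of Proposition~\ref{prop-ja-xu}; indeed it is exactly the content of that argument, repackaged. The starting point is the estimate \eqref{eq:29}, which gives $\|j_{u}(f)\|_{\Xu}\le\|f\|_{r}$ for every $f\in\gcgb$. Applying this to differences $f_{n}-f_{m}$ yields \eqref{eq:71}, so that whenever $(f_{n})\subset\gcgb$ converges to $a$ in $\csrgb$, the sequence $(\ju(f_{n}))$ is Cauchy in the Hilbert-module norm $\|\cdot\|_{\Xu}$. Since $\Xu$ is complete, this sequence converges to \emph{some} $\vec y\in\Xu$; the only work remaining is to identify this limit as $\ju(a)$.

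For that identification I would compare two a priori different modes of convergence. On one hand, evaluation $\vec x\mapsto\vec x(\eta)$ is a bounded linear map $\Xu\to B_{\eta}$ for each $\eta\in G_{u}$, so convergence in $\Xu$ forces $\ju(f_{n})(\eta)\to\vec y(\eta)$ in $B_{\eta}$. On the other hand, $j$ is norm reducing, hence continuous, so $j(f_{n})\to j(a)$ uniformly on $G$; restricting to $G_{u}$ gives $\ju(f_{n})(\eta)=j(f_{n})(\eta)\to j(a)(\eta)=\ju(a)(\eta)$ in $B_{\eta}$ as well. The fibre $B_{\eta}$ carries the norm topology and is therefore Hausdorff by Lemma~\ref{lem-phew}, so the two limits must agree: $\vec y(\eta)=\ju(a)(\eta)$ for every $\eta\in G_{u}$, i.e.\ $\vec y=\ju(a)$. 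Consequently $(\ju(f_{n}))\to\ju(a)$ in $\Xu$, which is the assertion.

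I do not expect a genuine obstacle here, since all the analytic content was already established for Proposition~\ref{prop-ja-xu}; the one point that deserves care is keeping the three topologies straight, namely convergence in the module norm on $\Xu$, fibrewise convergence in each $B_{\eta}$, and uniform convergence in $\Gamma_{0}(G;\B)$. The identification step hinges precisely on the fact that convergence in $\Xu$ implies fibrewise convergence (via continuity of evaluation) and that the fibre topologies are Hausdorff, so that a sequence cannot have two distinct fibrewise limits. Once that is in place the corollary is immediate.
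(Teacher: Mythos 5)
Your proposal is correct and is essentially identical to the paper's argument: the paper introduces this corollary with the phrase ``As a corollary of the proof'' of Proposition~\ref{prop-ja-xu}, and the intended justification is exactly the Cauchy estimate \eqref{eq:71}, completeness of $\Xu$, and the identification of the limit via continuity of evaluation, uniform convergence of $j(f_{n})$ to $j(a)$, and the Hausdorffness of the fibres from Lemma~\ref{lem-phew}. Nothing further is needed.
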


\section{The Module $\Wg$}
\label{sec:module-w}

Fix $\gamma \in G$ and let $r(\gamma)=u$ and $s(\gamma)=v$.  Let
$\Wgo$ be the vector space of finitely supported functions
$\xi \colon G_{u}\to \B$ such that $\xi(\eta)\in B_{\eta\gamma}$.
Then $\Wgo$ carries an obvious right $A_{v}$-action:
$\xi\cdot a(\eta)=\xi(\eta)a$.  If $\xi,\zeta\in \Wgo$, then for each
$\eta\in G_{u}$, $\xi(\eta)^{*}\zeta(\eta)$ is just the $A_{v}$-valued
inner product in $B_{\eta\gamma}$.  Hence
\begin{equation}
  \label{eq:52}
  \rip A_{v}<\xi,\zeta>=\sum_{\eta\in G_{u}} \xi(\eta)^{*} \, \zeta(\eta)
\end{equation}
is easily seen to be an $A_{v}$-valued pre-inner product on $\Wgo$ as
in \cite{rw:morita}*{Lemma~2.16}.  We denote the right Hilbert module
completion by $\Wg$.

\begin{remark}
  \label{rem-ilt-dense} Since both $\xi$ and $\zeta$ have finite
  support, the norm of \eqref{eq:52} is bounded by a multiple of
  $\|\xi\|_{\infty}\|\zeta\|_{\infty}$.  Thus, if $(\xi_{i})$
  converges uniformly to $\xi$ with supports all contained in a fixed
  finite set, then $\xi_{i}\to \xi$ in the norm induced by the inner
  product.  In particular, since $p \colon \B\to G$ is saturated,
  sections of the form $\eta\mapsto f(\eta)b$ with $f\in \gcgub$ and
  $b\in B_{\gamma}$ span a dense subspace of $\Wgo$.
\end{remark}

Since $\Xu$ is a full right Hilbert $A_{u}$-module, it is also a
$\Ku\sme A_{u}$-\ib\ where $\Ku$ is the ideal in $\L(\Xu)$ generated
by the ``rank-one'' operators $\lip\Ku<\vec x,\vec y>$, defined for
all $\vec x,\vec y\in \Xu$ by
$\lip\Ku<\vec x,\vec y>(\vec z) :=\vec x\cdot \Ripu<\vec y,\vec
z>$. We let
\begin{equation}
  \label{eq:5}
  \ko=\operatorname{span}\set{\lip\Ku<f,g>:f,g\in\gcgub},
\end{equation}
a dense subalgebra of $\Ku$.

Note that, if $f,g,h\in\gcgub$, then $\lip\Ku<f,g>(h) \in\gcgub$, and
\begin{equation}
  \label{eq:53}
  \lip\Ku<f,g>(h)(\eta)=f(\eta)\cdot \Ripu<g,h>\quad\text{for all
    $\eta\in G_{u}$,}
\end{equation}
where the product on the right-hand side is the $A_{u}$-action on the
fibre $B_{\eta}$.  It follows that, for $\gamma\in G_v^u$, the
interior tensor product $\Xu\tensor_{A_{u}}B_{\gamma}$ is a
$\Ku\sme A_{v}$-\ib\ when equipped with the inner products from
\cite{rw:morita}*{Proposition~3.16}.

\begin{lemma}\label{lem-psi-iso}
  The map $\Psi$ determined by
  $f\tensor b\mapsto (\eta\mapsto f(\eta)b)$ from
  $\gcgub\atensor B_{\gamma}$ to $\Wgo$ extends to a right Hilbert
  $A_{v}$-module isomorphism of $\Xu\tensor_{A_{u}} B_{\gamma}$ onto
  $\Wg$.
\end{lemma}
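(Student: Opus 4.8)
The plan is to check that $\Psi$ is inner-product preserving on elementary tensors; this single computation will show at once that $\Psi$ is well defined on the $A_{u}$-balanced tensor product, that it is isometric, and that it extends to the completion. Recall that, since $r(\gamma)=u$ and $s(\gamma)=v$, the fibre $B_{\gamma}$ is an $A_{u}\sme A_{v}$-\ib: its right $A_{v}$-valued inner product is $\rip A_{v}<b,c>=b^{*}c$ and its left $A_{u}$-action is $a\cdot c=ac$, both computed via the involution and multiplication in $\B$. With this structure, the $A_{v}$-valued inner product on the interior tensor product $\Xu\tensor_{A_{u}}B_{\gamma}$ from \cite{rw:morita}*{Proposition~3.16} is determined on elementary tensors by $\rip A_{v}<f\tensor b,g\tensor c>=\rip A_{v}<b,a\cdot c>$, where $a=\Ripu<f,g>\in A_{u}$.

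First I would carry out the key computation. For $f,g\in\gcgub$ and $b,c\in B_{\gamma}$, unwinding this formula via the module operations on $B_{\gamma}$ and using \eqref{eq:1} gives
\begin{equation*}
  \rip A_{v}<f\tensor b,g\tensor c>
  =b^{*}\Bigl(\sum_{\eta\in G_{u}}f(\eta)^{*}\,g(\eta)\Bigr)c
  =\sum_{\eta\in G_{u}}b^{*}\,f(\eta)^{*}\,g(\eta)\,c,
\end{equation*}
where only finitely many terms are nonzero since $f$ and $g$ are finitely supported. On the other hand, directly from the definition of $\Psi$ and the inner product \eqref{eq:52} on $\Wgo$, together with the compatibility of involution and multiplication in $\B$,
\begin{equation*}
  \rip A_{v}<\Psi(f\tensor b),\Psi(g\tensor c)>
  =\sum_{\eta\in G_{u}}\bigl(f(\eta)b\bigr)^{*}\bigl(g(\eta)c\bigr)
  =\sum_{\eta\in G_{u}}b^{*}\,f(\eta)^{*}\,g(\eta)\,c.
\end{equation*}
The two agree, so $\Psi$ preserves inner products on elementary tensors and hence, by sesquilinearity, on all of $\gcgub\atensor B_{\gamma}$. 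In particular every vector in the null space of the pre-inner product---including each balancing vector $f\cdot a\tensor b-f\tensor a\cdot b$---is sent to a vector of length zero, so $\Psi$ descends to the $A_{u}$-balanced tensor product and is isometric there.

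Finally I would pass to the completion and check the remaining structure. Since $\gcgub$ is dense in $\Xu$, the image of $\gcgub\atensor B_{\gamma}$ is dense in $\Xu\tensor_{A_{u}}B_{\gamma}$, so $\Psi$ extends uniquely to an isometry $\Psi\colon\Xu\tensor_{A_{u}}B_{\gamma}\to\Wg$. As an isometry out of a complete space it has closed range, and that range contains every section $\eta\mapsto f(\eta)b$ with $f\in\gcgub$ and $b\in B_{\gamma}$; by Remark~\ref{rem-ilt-dense} (which is where saturation of $\B$ enters) these span a dense subspace of $\Wg$, so $\Psi$ is surjective. Right $A_{v}$-linearity is immediate, since $\Psi\bigl((f\tensor b)\cdot a\bigr)(\eta)=f(\eta)(ba)=\bigl(f(\eta)b\bigr)a=\bigl(\Psi(f\tensor b)\cdot a\bigr)(\eta)$. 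Thus $\Psi$ is a surjective, inner-product preserving, right $A_{v}$-linear map, i.e.\ a right Hilbert $A_{v}$-module isomorphism. The verification is essentially bookkeeping; the only steps requiring genuine care are unwinding the interior-tensor-product inner product through the correct left $A_{u}$- and right $A_{v}$-module structures on $B_{\gamma}$, and invoking saturation via Remark~\ref{rem-ilt-dense} for surjectivity. I anticipate no serious obstacle beyond these.
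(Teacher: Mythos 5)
Your proof is correct and follows essentially the same route as the paper's: verify that $\Psi$ preserves the right $A_{v}$-valued inner products on elementary tensors (the same computation, reducing both sides to $\sum_{\eta}b^{*}f(\eta)^{*}g(\eta)c$), then invoke Remark~\ref{rem-ilt-dense} for dense range. The extra details you supply---well-definedness on the balanced tensor product, right $A_{v}$-linearity, and the closed-range argument for surjectivity---are exactly the routine points the paper leaves implicit.
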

\begin{proof}
  The map $\Psi$ is clearly $A_{u}$-balanced and bilinear.  Hence
  $\Psi$ indeed defines a map from $\gcgub\atensor B_{\gamma}$ to
  $\Wgo$.  Since
  \begin{align}
    \rip A_{v}<f\tensor b, g\tensor c>
    &= \brip A_{v}<{\Ripu<g,f>}\cdot b,c> \\
    &= b^{*}\Ripu<f,g>c \\
    &=\sum_{\eta\in G_{u}}b^{*} \, 
      f(\eta)^{*} \, g(\eta) \, 
      c  \\
    &= \rip A_{v}<\Psi(f\tensor b),\Psi(g\tensor c)>,
  \end{align}
  it follows that $\Psi$ preserves the inner product.  It follows from
  Remark~\ref{rem-ilt-dense} that $\Psi$ has dense range and the
  result follows.
\end{proof}

Using Lemma~\ref{lem-psi-iso}, we obtain the following.

\begin{prop}
  \label{prop-wg-ib} We can view $\Wg$ as a $\Ku\sme A_{v}$-\ib\ with
  respect to the left $\Ku$-action determined by
  \begin{equation}
    \label{eq:7}
    \bigl[\lip\Ku<f,g>\cdot \xi\bigr]
    (\eta)=f(\eta)\sum_{\tau\in
      G_{u}}g(\tau)^{*} \, \xi(\tau) \quad\text{for $f,g\in\gcgub$ and
      $\xi\in \Wgo$,}
  \end{equation}
  and the left $\Ku$-valued inner product determined by
  \begin{equation}
    \label{eq:8}
    \bigl[\lip\Ku<\xi,\zeta>\cdot \omega\bigr]
    (\eta)=\xi(\eta) \sum_{\tau\in G_{u}}
    \zeta(\tau)^{*}  \, \omega(\tau)\quad\text{for $\xi,\zeta,\omega\in\Wgo$.}
  \end{equation}
\end{prop}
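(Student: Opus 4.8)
The plan is to obtain the structure on $\Wg$ by transporting the interior–tensor–product imprimitivity bimodule through the isomorphism $\Psi$ of Lemma~\ref{lem-psi-iso}. Since $\Xu$ is a $\Ku\sme A_{u}$-\ib\ and $B_{\gamma}$ is an $A_{u}\sme A_{v}$-\ib, the interior tensor product $\Xu\tensor_{A_{u}}B_{\gamma}$ is a $\Ku\sme A_{v}$-\ib\ for the left action $T\cdot(x\tensor b)=(T\cdot x)\tensor b$ and the inner products of \cite{rw:morita}*{Proposition~3.16}; in particular its left $\Ku$-valued inner product is $\lip\Ku<x\tensor b,x'\tensor b'>=\lip\Ku<{x\cdot\lip A_{u}<b,b'>},x'>$, where $\lip A_{u}<b,b'>=b(b')^{*}$ denotes the left $A_{u}$-valued inner product on the bimodule $B_{\gamma}$. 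As $\Psi$ is an isomorphism of right Hilbert $A_{v}$-modules, pulling this data back along $\Psi$ makes $\Wg$ a $\Ku\sme A_{v}$-\ib; every imprimitivity bimodule identity is an equation in the two actions and two inner products, all of which $\Psi$ intertwines, so nothing need be rechecked. It remains only to identify the transported left action and left inner product with the formulas \eqref{eq:7} and \eqref{eq:8}.

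First I would check the two formulas on the image under $\Psi$ of elementary tensors. For the action, take $f,g\in\gcgub$ and $\xi=\Psi(f'\tensor b)$, so that $\xi(\tau)=f'(\tau)b$. Transport gives $\lip\Ku<f,g>\cdot\xi=\Psi\bigl(\lip\Ku<f,g>(f')\tensor b\bigr)$, and \eqref{eq:53} shows $\lip\Ku<f,g>(f')(\eta)=f(\eta)\Ripu<g,f'>$; hence $\bigl[\lip\Ku<f,g>\cdot\xi\bigr](\eta)=f(\eta)\Ripu<g,f'>b=f(\eta)\sum_{\tau\in G_{u}}g(\tau)^{*}\bigl(f'(\tau)b\bigr)=f(\eta)\sum_{\tau\in G_{u}}g(\tau)^{*}\xi(\tau)$, which is \eqref{eq:7}. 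For the inner product, take $\xi=\Psi(f\tensor b)$ and $\zeta=\Psi(g\tensor c)$; by the previous paragraph the transported inner product is $\lip\Ku<\xi,\zeta>=\lip\Ku<f\cdot(bc^{*}),g>$, and evaluating this rank-one operator on $\omega\in\Wgo$ by its definition $\lip\Ku<\vec x,\vec y>(\vec z)=\vec x\cdot\Ripu<\vec y,\vec z>$ gives $\bigl[\lip\Ku<\xi,\zeta>\cdot\omega\bigr](\eta)=f(\eta)\,b c^{*}\sum_{\tau\in G_{u}}g(\tau)^{*}\omega(\tau)$. Associativity of the multiplication in $\B$ lets me absorb $c^{*}$ into the sum, so this equals $f(\eta)b\sum_{\tau\in G_{u}}\bigl(g(\tau)c\bigr)^{*}\omega(\tau)=\xi(\eta)\sum_{\tau\in G_{u}}\zeta(\tau)^{*}\omega(\tau)$, which is \eqref{eq:8}.

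The step I expect to require the most care is upgrading these identities from elementary tensors to arbitrary elements of $\Wgo$: by Remark~\ref{rem-ilt-dense} the sections $\eta\mapsto f'(\eta)b$ span only a \emph{dense} subspace of $\Wgo$ (the bundle is merely saturated), so the explicit formulas cannot follow from linearity alone. To finish I would use that evaluation $\xi\mapsto\xi(\eta)$ is norm reducing on $\Wg$, since $\xi(\eta)^{*}\xi(\eta)\le\rip A_{v}<\xi,\xi>$ in $A_{v}$ forces $\|\xi(\eta)\|\le\|\xi\|_{\Wg}$, and hence continuous. Given $\xi,\zeta,\omega\in\Wgo$, Remark~\ref{rem-ilt-dense} supplies approximants from the dense span converging uniformly with supports in a fixed finite set. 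Since $\lip\Ku<f,g>$ and $\lip\Ku<\xi,\zeta>$ are bounded operators, the left-hand sides of \eqref{eq:7} and \eqref{eq:8}, evaluated at $\eta$, vary continuously with these approximants; the right-hand sides, being finite sums, vary continuously by continuity of multiplication in $\B$. Passing to the limit and using that the fibre $B_{\eta\gamma}$ is Hausdorff (Lemma~\ref{lem-phew}) then yields \eqref{eq:7} and \eqref{eq:8} for all $\xi,\zeta,\omega\in\Wgo$, which completes the identification and hence the proof.
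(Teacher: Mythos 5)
Your proposal is correct and follows essentially the same route as the paper's (sketched) proof: transport the $\Ku\sme A_{v}$-imprimitivity-bimodule structure of $\Xu\tensor_{A_{u}}B_{\gamma}$ through $\Psi$ via \cite{rw:morita}*{Lemma~4.55}, verify \eqref{eq:7} and \eqref{eq:8} on images of elementary tensors using \eqref{eq:53} and the identity $\lip\Ku<f\tensor b,g\tensor c>=\lip\Ku<f,g\cdot cb^{*}>$, and extend by density. Your final paragraph merely spells out the continuity argument (norm-reducing evaluation, Remark~\ref{rem-ilt-dense}, Hausdorff fibres) that the paper compresses into the phrase ``by continuity,'' which is a welcome but not substantively different addition.
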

\begin{proof}
  [Sketch of the Proof] Since we can identify
  $\K(\Xu\tensor_{A_{u}}B_{\gamma})$ with $\Ku$, and since by
  \cite{rw:morita}*{Lemma~4.55}, $T\mapsto \Psi T \Psi^{-1}$ is an
  isomorphism of $\Ku$ with $\K(\W_{\gamma})$, it suffices to see that
  the induced action of $\Ku$ and the left inner product are as
  specified on $\ko$.

  Suppose that $\xi=\Psi(h\tensor b)$.  Then
  \begin{align}
    \Psi\bigl(\lip\Ku<f,g>(h)\tensor b\bigr)(\eta)
    &= \lip\Ku<f,g>(h)(\eta)b \\
    &= f(\eta)\cdot \Ripu<g,h> b \\
    &= f(\eta) \sum_{\tau\in G_{u}} g(\tau)^{*} \, 
      \xi(\tau).
  \end{align}
  Hence for $\xi$ of the form $\eta\mapsto h(\eta)b,$ the formula for
  $\lip\Ku<f,g>\cdot \xi$ in \eqref{eq:7} indeed coincides with
  $\Psi \bigl(\lip\Ku<f,g>\cdot \Psi^{-1}(\xi)\bigr)$.  Since such
  elements $\xi$ span a dense subspace, we have established
  \eqref{eq:7}.

  By \cite{rw:morita}*{Lemma~4.55}, the $\K(\Wg)$-valued inner product
  on $\Wg$ is given by
  \begin{align}
    \blip \Wg <\Psi(f\tensor b),\Psi(g\tensor c)>= \Psi\circ \lip\Ku<f\tensor
    b,g\tensor c>)\circ \Psi^{-1}.
  \end{align}

  On the other hand,
  \begin{equation}
    \label{eq:9}
    \lip\Ku<f\tensor b,g\tensor c>=\blip\Ku<f,g\cdot {\lip A_{u}<c,b>}> =
    \lip \Ku<f,g \cdot cb^{*}>.
  \end{equation}
  It follows that
  \begin{equation}
    \label{eq:19}
    \lip\Ku<f\tensor b,g\tensor c>(h\tensor d)= \lip \Ku<f,g \cdot
    cb^{*}>(h) \tensor d.
  \end{equation}
  If we let $\xi=\Psi(f\tensor b)$, $\zeta=\Psi(g\tensor c)$, and
  $\omega=\Psi (h\tensor d)$, then
  \begin{align}
    \Psi(\lip\Ku<f\tensor b,g\tensor c>
    \cdot
    (h\tensor d))(\eta)
    &= \lip\Ku<f,g \cdot
      cb^{*}>(h)(\eta) \, d \\
    &= f(\eta)\sum_{\tau\in G_{u}}bc^{*} \, g(\tau)^{*} \, h(\tau) \, d \\
    &= \xi(\eta)\sum_{\tau\in G_{u}}
      \zeta(\tau)^{*} \, \omega(\tau).
  \end{align}
  Hence \eqref{eq:8} holds by continuity.
\end{proof}

Just as in Lemma~\ref{lem-xu-form}, we can realize $\Wg$ as follows.
\begin{lemma}
  \label{lem-wg-form} If $\gamma\in G_{v}^{u}$, then we can realize
  $\Wg$ with the right Hilbert $A_{v}$-module
  \begin{equation}
    \label{eq:20}
    \Bigl\{\,\vec x \in \prod_{\eta\in G_{u}}B_{\eta\gamma}:
    \text{$\sum_{\eta\in G_{u}}\vec x(\eta)^{*} \, \vec x(\eta)$ converges
      in $A_{v}$}\,\Bigr\}
  \end{equation}
  equipped with the inner product and right $A_{v}$-action given by
  \begin{equation}
    \label{eq:21}
    \rip A_{v}<\vec x,\vec y>=\sum_{\eta\in G_{u}} \, \vec x(\eta)^{*}\vec
    y(\eta) \quad\text{and} \quad (\vec x\cdot a) (\eta)
    =\vec x (\eta)\cdot a.
  \end{equation}

\end{lemma}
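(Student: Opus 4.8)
The plan is to follow the proof of Lemma~\ref{lem-xu-form} essentially verbatim, simply replacing the fibre $B_{\eta}$ by $B_{\eta\gamma}$ and the coefficient algebra $A_{u}$ by $A_{v}$ throughout. The structural point that makes this legitimate is that, for $\gamma\in G_{v}^{u}$ and $\eta\in G_{u}$, the composite $\eta\gamma$ satisfies $r(\eta\gamma)=r(\eta)$ and $s(\eta\gamma)=v$, so $B_{\eta\gamma}$ is a $B_{r(\eta)}\sme A_{v}$-\ib. Consequently $\vec x(\eta)^{*}\,\vec y(\eta)$ is a well-defined $A_{v}$-valued inner product, each summand $\vec x(\eta)^{*}\vec x(\eta)$ is a positive element of the \cs-algebra $A_{v}$, and the formulas in \eqref{eq:21} are exactly of the form needed to assemble a right Hilbert $A_{v}$-module out of the family $\set{B_{\eta\gamma}}_{\eta\in G_{u}}$.

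First I would invoke \cite{rw:morita}*{Proposition~2.15} (or the more general discussion in \cite{lan:hilbert}*{p.~6}) to conclude that the set \eqref{eq:20}, equipped with the operations \eqref{eq:21}, is itself a right Hilbert $A_{v}$-module; in particular it is complete, and the sum defining its inner product converges in $A_{v}$ in the sense of Remark~\ref{rem-sums}. Write $\|\cdot\|_{0}$ for the resulting norm. Since $\Wgo$ consists precisely of the finitely supported elements of \eqref{eq:20}, and since the inner products \eqref{eq:52} and \eqref{eq:21} and the right actions agree, $\Wgo$ embeds isometrically into \eqref{eq:20}. It then remains only to show that this embedding has dense range, for then \eqref{eq:20} is (isomorphic to) the completion $\Wg$.

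To prove density I would repeat the truncation argument. Given $\vec x$ in \eqref{eq:20} and a finite $F\subset G_{u}$, set $\vec x_{F}=\charfcn F\,\vec x\in\Wgo$. Because $\vec x_{F}$ agrees with $\vec x$ on $F$ and vanishes off $F$, all three cross terms coincide,
\begin{equation*}
  \rip A_{v}<\vec x,\vec x_{F}>=\rip A_{v}<\vec x_{F},\vec x>=\rip A_{v}<\vec x_{F},\vec x_{F}>=\sum_{\eta\in F}\vec x(\eta)^{*}\,\vec x(\eta),
\end{equation*}
so that expanding the inner product gives
\begin{equation*}
  \|\vec x-\vec x_{F}\|_{0}^{2}=\bigl\|\rip A_{v}<\vec x,\vec x>-\rip A_{v}<\vec x_{F},\vec x_{F}>\bigr\|.
\end{equation*}
By hypothesis $\sum_{\eta\in G_{u}}\vec x(\eta)^{*}\vec x(\eta)$ converges in $A_{v}$, so its partial sums $\rip A_{v}<\vec x_{F},\vec x_{F}>$ converge to $\rip A_{v}<\vec x,\vec x>$ as $F$ increases through the finite subsets of $G_{u}$; hence $\|\vec x-\vec x_{F}\|_{0}\to 0$ and $\Wgo$ is dense in \eqref{eq:20}. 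The result follows.

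I expect no serious obstacle: the argument is purely formal once Lemma~\ref{lem-xu-form} is in hand. The only point requiring care is the appeal to \cite{rw:morita}*{Proposition~2.15} and \cite{lan:hilbert}*{p.~6} guaranteeing that \eqref{eq:20} really is complete and that the inner-product sum converges; this is precisely where the positivity of the summands $\vec x(\eta)^{*}\vec x(\eta)$ in $A_{v}$ (noted after Remark~\ref{rem-sums}) enters, and it is the exact analogue of the convergence claim recorded in Remark~\ref{rem-sum-conv}.
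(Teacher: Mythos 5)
Your proof is correct and is exactly the argument the paper intends: the paper gives no separate proof of Lemma~\ref{lem-wg-form}, saying only ``Just as in Lemma~\ref{lem-xu-form},'' and your verbatim transport of that proof --- with $B_{\eta}$ replaced by $B_{\eta\gamma}$ and $A_{u}$ by $A_{v}$, justified by the observation that $B_{\eta\gamma}$ is a right Hilbert $A_{v}$-module --- is precisely what is meant.
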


\section{The Convolution Formula}
\label{sec:convolution-formula}

\begin{prop}
  \label{prop-key-iso} Suppose that $\gamma\in G_{v}^{u}$ and that
  $\dX_{u}$ is the $A_{u}\sme \Ku$-\ib\ dual to $\Xu$.  Then there is
  a right Hilbert $A_{v}$-module isomorphism
  $\Phi \colon \dX_{u}\tensor_{\Ku}\Wg\to B_{\gamma}$ given on
  elementary tensors
  $\dual f\tensor \xi\in \dual{\gcgub}\atensor \Wgo$ by
  \begin{equation}
    \label{eq:22}
    \Phi(\dual f\tensor \xi)=\sum_{\eta\in G_{u}}f(\eta)^{*} \, \xi(\eta).
  \end{equation}
\end{prop}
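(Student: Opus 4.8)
The plan is to define $\Phi$ by \eqref{eq:22} on the dense subspace $\dual{\gcgub}\atensor\Wgo$ of $\dX_{u}\tensor_{\Ku}\Wg$, to check that it is $\C$-bilinear, $\Ku$-balanced, and inner-product preserving, and finally to check that its range is dense in $B_{\gamma}$; since an inner-product preserving map with dense range extends to a right Hilbert $A_{v}$-module isomorphism, this suffices. As a heuristic that such an isomorphism ought to exist, note the chain of standard identifications
\begin{equation*}
  \dX_{u}\tensor_{\Ku}\Wg\cong\dX_{u}\tensor_{\Ku}\Xu\tensor_{A_{u}}B_{\gamma}\cong A_{u}\tensor_{A_{u}}B_{\gamma}\cong B_{\gamma},
\end{equation*}
where the first step uses Lemma~\ref{lem-psi-iso} and the second uses the imprimitivity-bimodule identity $\dX_{u}\tensor_{\Ku}\Xu\cong A_{u}$; the actual content is to show that the composite is implemented by the explicit formula \eqref{eq:22}, which I verify directly.

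First I would note that, since $f\in\gcgub$ and $\xi\in\Wgo$ are finitely supported, the sum in \eqref{eq:22} is finite and each term $f(\eta)^{*}\xi(\eta)$ lies in $B_{\eta^{-1}}B_{\eta\gamma}\subseteq B_{\gamma}$, so $\Phi(\dual f\tensor\xi)\in B_{\gamma}$, and $\C$-bilinearity is immediate. To see that $\Phi$ is balanced over $\Ku$ it is enough to test on $T=\lip\Ku<g,h>\in\ko$. Using the right $\Ku$-action on the dual module, $\dual f\cdot T=\dual{T^{*}\cdot f}$, which for $T=\lip\Ku<g,h>$ reads $\dual f\cdot T=\dual{\lip\Ku<h,g>\cdot f}$, together with \eqref{eq:53} to evaluate $\lip\Ku<h,g>\cdot f$ and \eqref{eq:7} to evaluate $T\cdot\xi$, both $\Phi(\dual f\cdot T\tensor\xi)$ and $\Phi(\dual f\tensor T\cdot\xi)$ reduce to the common value $\bigl(\sum_{\sigma\in G_{u}}f(\sigma)^{*}g(\sigma)\bigr)\bigl(\sum_{\tau\in G_{u}}h(\tau)^{*}\xi(\tau)\bigr)$. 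What makes this collapse work is that the inner sum is a single element of the coefficient algebra, independent of the outer index, and so may be factored out.

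Next I would check that $\Phi$ preserves the $A_{v}$-valued inner product. On $B_{\gamma}$, viewed as a right Hilbert $A_{v}$-module, the inner product is $\rip A_{v}<b,c>=b^{*}c$, so $\rip A_{v}<\Phi(\dual f\tensor\xi),\Phi(\dual g\tensor\zeta)>=\bigl(\sum_{\eta\in G_{u}}\xi(\eta)^{*}f(\eta)\bigr)\bigl(\sum_{\tau\in G_{u}}g(\tau)^{*}\zeta(\tau)\bigr)$. On the source, the tensor-product inner product gives $\rip A_{v}<\dual f\tensor\xi,\dual g\tensor\zeta>=\rip A_{v}<\xi,{\rip\Ku<\dual f,\dual g>}\cdot\zeta>$, and since $\rip\Ku<\dual f,\dual g>=\lip\Ku<f,g>$ for the dual module, expanding the left $\Ku$-action via \eqref{eq:7} and again factoring out the index-independent inner sum produces exactly the same two-factor product. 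Hence the inner products agree, and in particular $\Phi$ is isometric.

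Finally, for density of the range I would take $f=h_{u}^{a}$ with $a\in A_{u}$ and $\xi$ the section supported at $u$ with value $b\in B_{u\gamma}=B_{\gamma}$; then $\Phi(\dual f\tensor\xi)=a^{*}b$. Because $B_{\gamma}$ is a nondegenerate left $A_{u}$-module, elements of the form $a^{*}b$ are dense in $B_{\gamma}$, so $\Phi$ has dense range. Being isometric on $\dual{\gcgub}\atensor\Wgo$, it extends to an isometry of $\dX_{u}\tensor_{\Ku}\Wg$ into $B_{\gamma}$ whose range is closed and dense, hence all of $B_{\gamma}$; this yields the claimed isomorphism. I expect the main obstacle to be nothing deep but rather the bookkeeping of the iterated sums and the dual-module conventions: getting the right $\Ku$-action on $\dX_{u}$ and the tensor-product inner-product formula correct so that, in each of the balancing and isometry computations, the index-independent inner sum can be pulled out and the expressions collapse to the same product.
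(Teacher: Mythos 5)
Your proposal is correct and follows essentially the same route as the paper: define $\Phi$ on elementary tensors, verify it is bilinear, $\Ku$-balanced, and preserves the $A_{v}$-valued inner products, then note it has dense range (the paper even records your heuristic identification chain separately, in Remark~\ref{rem-alt-proof}). You simply supply more detail on the balancing and density steps, which the paper dismisses as straightforward, and your conventions for the dual module and the internal tensor product inner product are all used correctly.
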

\begin{proof}
  It is straightforward to see that $\Phi$ is $\ko$-balanced,
  bilinear, and $A_{v}$-linear.

  On the other hand,
  \begin{align}
    \brip A_{v}<\dual f\tensor \xi,\dual g \tensor \zeta>
    &= \rip A_{v}< {\lip \Ku<g,f>}\cdot \xi,\zeta> \\
    &= \sum_{\eta\in G_{u}} \lip\Ku<g,f>(\xi)(\eta)^{*} \, \zeta(\eta) \\
    &= \sum_{\eta\in G_{u}} \sum_{\tau\in G_{u}}
      \bigl(g(\eta) \, f(\tau)^{*} \, \xi(\tau)\bigr)^{*} \, \zeta(\eta) \\
    &= \sum_{\eta\in G_{u}} \sum_{\tau\in G_{u}}
      (f(\tau)^{*} \, \xi(\tau))^{*}
      g(\eta)^{*} \, \zeta(\eta) \\
    &= \Phi(\dual f\tensor \xi)^{*} \, \Phi(\dual g\tensor \zeta) \\
    &= \brip A_{v}<\Phi(\dual f\tensor \xi),\Phi(\dual g\tensor \zeta)>.
  \end{align}
  It follows that $\Phi$ preserves the right $A_{v}$-pre-inner
  products and hence extends to the completion
  $\dX_{u}\tensor_{\Ku}\Wg$.  Since it clearly has dense range, this
  completes the proof.
\end{proof}

\begin{remark}
  \label{rem-alt-proof} We could also obtain
  Proposition~\ref{prop-key-iso} by observing that
  \begin{align}
    \label{eq:49}
    B_{\gamma}
    &\cong
      A_{u}\tensor_{A_{u}}B_{\gamma}\cong
      (\dX_{u}\tensor_{\Ku}\Xu)\tensor_{A_{u}} B_{\gamma}
      \cong
      \dX_{u}\tensor_{\Ku}(\Xu\tensor_{A_{u}}B_{\gamma})\\
    & \cong
      \dX_u
      \tensor_{\Ku}\Wg,
  \end{align}
  but a direct proof will be useful in the sequel.
\end{remark}

\begin{cor}
  \label{cor-sum-conv} After realizing $\Xu$ as in
  Lemma~\ref{lem-xu-form} and $\Wg$ as in Lemma~\ref{lem-wg-form}, the
  formula in \eqref{eq:22} extends to elementary tensors in
  $\dX_{u}\atensor \Wg$.  That is, if $\vec x \in \Xu$ and
  $\vec y\in \Wg$, then
  \begin{equation}
    \label{eq:23}
    \Phi(\dual{\vec x}\tensor \vec y)=\sum_{\eta\in G_{u}} \vec
    x(\eta)^{*} \,  \vec y(\eta).
  \end{equation}
  In particular, the sum in \eqref{eq:23} converges in $B_{\gamma}$,
  and
  \begin{equation}
    \label{eq:24}
    \Bigl\|\sum_{\eta\in G_{u}} \vec x(\eta)^{*} \,  \vec y
    (\eta)\Bigr\| \le \|\vec 
    x\|_{\Xu}\| \vec y\|_{\Wg}.
  \end{equation}
\end{cor}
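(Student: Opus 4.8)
The plan is to reduce the statement to the finitely-supported case already settled in Proposition~\ref{prop-key-iso} by truncating $\vec x$ and $\vec y$ to finite subsets of $G_{u}$, and then to pass to the limit using the continuity of the isometric isomorphism $\Phi$, the continuity of the flip $\flat$, and the joint continuity of the internal tensor product supplied by Lemma~\ref{lem-cross-norm}. The norm estimate \eqref{eq:24} will then drop out of the same lemma together with the fact that $\Phi$ preserves norms.

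First I would fix $\vec x\in\Xu$ and $\vec y\in\Wg$, realized as in Lemma~\ref{lem-xu-form} and Lemma~\ref{lem-wg-form}. For each finite $F\subset G_{u}$ put $\vec x_{F}=\charfcn F\,\vec x$ and $\vec y_{F}=\charfcn F\,\vec y$. Since $G_{u}$ is discrete, these are finitely-supported sections, so $\vec x_{F}\in\gcgub$ and $\vec y_{F}\in\Wgo$, and \eqref{eq:22} applies verbatim to give
\[
\Phi(\dual{\vec x_{F}}\tensor \vec y_{F})=\sum_{\eta\in F}\vec x(\eta)^{*}\,\vec y(\eta)=:s_{F},
\]
which is exactly the partial sum attached to $F$ in the sense of Remark~\ref{rem-sums}.

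Next I would show that $s_{F}\to\Phi(\dual{\vec x}\tensor \vec y)$ along the net of finite subsets directed by inclusion. The computation in the proof of Lemma~\ref{lem-xu-form} shows $\vec x_{F}\to\vec x$ in $\Xu$, and the same argument applied to $\Wg$ via Lemma~\ref{lem-wg-form} gives $\vec y_{F}\to\vec y$ in $\Wg$. Because $\flat\colon\Xu\to\dX_{u}$ is a conjugate-linear isometry, $\dual{\vec x_{F}}\to\dual{\vec x}$ in $\dX_{u}$. Now the estimate of Lemma~\ref{lem-cross-norm}, applied to $\dX_{u}\tensor_{\Ku}\Wg$, makes the elementary-tensor map jointly continuous: writing
\[
\dual{\vec x_{F}}\tensor \vec y_{F}-\dual{\vec x}\tensor \vec y=(\dual{\vec x_{F}}-\dual{\vec x})\tensor \vec y_{F}+\dual{\vec x}\tensor(\vec y_{F}-\vec y),
\]
the right-hand side has norm at most $\|\dual{\vec x_{F}}-\dual{\vec x}\|_{\dX_{u}}\|\vec y_{F}\|_{\Wg}+\|\dual{\vec x}\|_{\dX_{u}}\|\vec y_{F}-\vec y\|_{\Wg}\to0$. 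Since $\Phi$ is an isometric isomorphism, hence continuous, applying it yields $s_{F}=\Phi(\dual{\vec x_{F}}\tensor \vec y_{F})\to\Phi(\dual{\vec x}\tensor \vec y)$ in $B_{\gamma}$. By the definition in Remark~\ref{rem-sums}, this is precisely the assertion that $\sum_{\eta\in G_{u}}\vec x(\eta)^{*}\,\vec y(\eta)$ converges in $B_{\gamma}$ to $\Phi(\dual{\vec x}\tensor \vec y)$, which is \eqref{eq:23}.

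Finally, the bound \eqref{eq:24} follows at once. Since $\Phi$ preserves norms and $\|\dual{\vec x}\|_{\dX_{u}}=\|\vec x\|_{\Xu}$ for the dual module, Lemma~\ref{lem-cross-norm} gives
\[
\Bigl\|\sum_{\eta\in G_{u}}\vec x(\eta)^{*}\,\vec y(\eta)\Bigr\|=\|\Phi(\dual{\vec x}\tensor \vec y)\|_{B_{\gamma}}=\|\dual{\vec x}\tensor \vec y\|_{\dX_{u}\tensor_{\Ku}\Wg}\le\|\vec x\|_{\Xu}\,\|\vec y\|_{\Wg}.
\]
I expect the only point needing genuine care to be the limit step: one must run both truncations over the \emph{same} directed family of finite sets so that the resulting net of partial sums matches the net defining the sum in Remark~\ref{rem-sums}, and one must invoke the cross-norm inequality of Lemma~\ref{lem-cross-norm} (rather than mere separate continuity in each variable) to control the mixed term. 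Everything else is bookkeeping with the identifications already established in Lemmas~\ref{lem-xu-form} and~\ref{lem-wg-form} and Proposition~\ref{prop-key-iso}.
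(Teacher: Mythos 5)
Your proof is correct and follows essentially the same route as the paper's: truncate $\vec x$ and $\vec y$ to finite subsets of $G_{u}$, use Lemma~\ref{lem-xu-form} (and its analogue for $\Wg$) to see the truncations converge, apply Lemma~\ref{lem-cross-norm} to get convergence of the elementary tensors, and push through the isometry $\Phi$. The only cosmetic difference is that the paper obtains \eqref{eq:24} by bounding the finite partial sums and passing to the limit, whereas you apply the isometry directly to $\dual{\vec x}\tensor\vec y$; both are fine.
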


\begin{proof}
  If $F$ is a finite subset of $G_{u}$, then let
  $\vec x_{F}=\charfcn F \vec x$ and $\vec y_{F}=\charfcn F \vec
  y$. Then, just as in the proof of Lemma~\ref{lem-xu-form}, the nets
  $(\vec x_{F})$ and $(\vec y_{F})$ converge to $\vec x$ in $\Xu$ and
  $\vec y$ in $\Wg$, respectively.  By Lemma~\ref{lem-cross-norm},
  the net $\bigl(\dual{\vec x_{F}} \tensor \vec y_{F} \bigr)$
  thus converges to $\dual {\vec x} \tensor \vec y$ in
  $\dX_{u}\tensor_{\Ku}\Wg$. Therefore
  $\bigl(\Phi(\dual{\vec x_{F}} \tensor \vec y_{F})\bigr)$ converges
  to $\Phi(\dual {\vec x} \tensor \vec y)$ in $B_{\gamma}$.  Since
  \begin{equation}
    \label{eq:33}
    \Phi(\dual{\vec
      x_{F}} \tensor \vec y_{F}) =\sum_{\eta\in F}\vec x(\eta)^{*} \,
    \vec y(\eta), 
  \end{equation}
  and since $B_{\gamma}$ is Hausdorff, this establishes \eqref{eq:23}.

  Since $\Phi$ preserves the inner product,
  \begin{equation}
    \label{eq:25}
    \Bigl\|\sum_{\eta\in F} \vec x(\eta)^{*}  \, \vec y (\eta)\Bigr\|
    =
    \|\Phi(\dual{\vec x_{F}}\tensor \vec y_{F})\|
    =\|\dual{\vec x_{F}}\tensor \vec y_{F}\|.
  \end{equation}
  Using Lemma~\ref{lem-cross-norm}, we conclude that
  \begin{equation}
    \Bigl\|\sum_{\eta\in F} \vec x(\eta)^{*} \,  \vec y (\eta)\Bigr\|
    \le \|\vec x_{F}\|_{\Xu} \|\vec y_{F}\|_{\Wg} \le \|\vec x\|_{\Xu}
    \|\vec y\|_{\Wg},
  \end{equation}
  and we thus obtain \eqref{eq:24} as well.
\end{proof}

\begin{cor}
  \label{cor-converges} Suppose that $a,b\in\csrgb$ and that
  $\gamma\in G^{u}_{v}$.  Then
  \begin{equation}
    \label{eq:30}
    \sum_{\eta\in G^{u}}j(a)(\eta) \, j(b)(\eta^{-1}\gamma)
  \end{equation}
  converges in $B_{\gamma}$.  Moreover,
  \begin{equation}
    \label{eq:36}
    j(a*b)(\gamma)=\sum_{\eta\in G^{u}}j(a)(\eta) \, j(b)(\eta^{-1}\gamma).
  \end{equation}
\end{cor}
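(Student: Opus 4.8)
The plan is to recognize the sum in \eqref{eq:30} as the image, under the isomorphism $\Phi$ of Proposition~\ref{prop-key-iso}, of a single elementary tensor $\dual{\vec x}\tensor\vec y\in\dX_{u}\tensor_{\Ku}\Wg$ assembled from $a$ and $b$, and then to deduce \eqref{eq:36} by approximating $a$ and $b$ from $\gcgb$. Writing $u=r(\gamma)$ and $v=s(\gamma)$, the bijection $\eta\mapsto\eta^{-1}$ of $G^{u}$ onto $G_{u}$ turns the sum into $\sum_{\eta\in G_{u}}j(a)(\eta^{-1})\,j(b)(\eta\gamma)$, so it suffices to work with the latter indexing.

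First I would set $\vec x=j_{u}(a^{*})$, which lies in $\Xu$ by Proposition~\ref{prop-ja-xu} and satisfies $\vec x(\eta)^{*}=j(a^{*})(\eta)^{*}=j(a)(\eta^{-1})$ by Corollary~\ref{cor-j-star}, and I would define $\vec y$ by $\vec y(\eta)=j(b)(\eta\gamma)$. Since $\eta\mapsto\eta\gamma$ maps $G_{u}$ bijectively onto $G_{v}$ and $j_{v}(b)\in\X_{v}$ by Proposition~\ref{prop-ja-xu}, the rule $\vec w\mapsto(\eta\mapsto\vec w(\eta\gamma))$ is an inner-product preserving isomorphism of $\X_{v}$ onto $\Wg$ (compare Lemmas~\ref{lem-xu-form} and~\ref{lem-wg-form}), whence $\vec y\in\Wg$. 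With these choices Corollary~\ref{cor-sum-conv} applies directly and yields both the convergence of $\sum_{\eta\in G_{u}}\vec x(\eta)^{*}\,\vec y(\eta)=\Phi(\dual{\vec x}\tensor\vec y)$ in $B_{\gamma}$ and, after undoing the reindexing, the convergence of \eqref{eq:30} with value $\Phi(\dual{\vec x}\tensor\vec y)$.

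To identify this value with $j(a*b)(\gamma)$, I would first dispose of the case $a=f,\,b=g\in\gcgb$: then $f*g\in\gcgb$, so $j(f*g)(\gamma)=(f*g)(\gamma)$ is the defining finite convolution sum, and since $j$ restricts to the identity on $\gcgb$ this sum equals $\sum_{\eta\in G^{u}}j(f)(\eta)\,j(g)(\eta^{-1}\gamma)$; equivalently, $\Phi(\dual{\vec x}\tensor\vec y)=(f*g)(\gamma)$ for the vectors built from $f$ and $g$. For general $a,b$ I would pick $f_{n}\to a$ and $g_{n}\to b$ in $\csrgb$ and push both sides through the limit. On the left, continuity of multiplication gives $f_{n}*g_{n}\to a*b$, and as $j$ is norm reducing, $j(f_{n}*g_{n})(\gamma)\to j(a*b)(\gamma)$ in $B_{\gamma}$. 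On the right, $f_{n}^{*}\to a^{*}$ and $g_{n}\to b$ give $\vec x_{n}=j_{u}(f_{n}^{*})\to\vec x$ in $\Xu$ and $j_{v}(g_{n})\to j_{v}(b)$ in $\X_{v}$ by Corollary~\ref{cor-j-xu}, hence $\vec y_{n}\to\vec y$ in $\Wg$; since $\flat$ and $\Phi$ are isometric and Lemma~\ref{lem-cross-norm} controls the tensor, $\Phi(\dual{\vec x_{n}}\tensor\vec y_{n})\to\Phi(\dual{\vec x}\tensor\vec y)$. But $\Phi(\dual{\vec x_{n}}\tensor\vec y_{n})=j(f_{n}*g_{n})(\gamma)$ by the $\gcgb$ case, so $j(f_{n}*g_{n})(\gamma)$ converges to both $j(a*b)(\gamma)$ and $\Phi(\dual{\vec x}\tensor\vec y)$; as $B_{\gamma}$ is Hausdorff by Lemma~\ref{lem-phew}, the two limits coincide, which is \eqref{eq:36}.

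The routine points are the two reindexings and the verification that $\vec y\in\Wg$; the one genuinely load-bearing step is Corollary~\ref{cor-sum-conv}, which supplies the only available sense of convergence of the infinite sum (the unconditional, net convergence of Remark~\ref{rem-sums}) together with the cross-norm bound \eqref{eq:60} that legitimizes the limiting arguments. I expect the main obstacle to be essentially bookkeeping: keeping the substitutions $\eta\leftrightarrow\eta^{-1}$ and $\eta\leftrightarrow\eta\gamma$ straight so that $\vec x\in\Xu$ and $\vec y\in\Wg$ are matched to the correct fibres of $\B$, after which everything follows formally from the isometries $\flat$ and $\Phi$ and from Lemma~\ref{lem-cross-norm}.
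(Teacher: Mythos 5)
Your proposal is correct and follows essentially the same route as the paper's own proof: reindex via $\eta\mapsto\eta^{-1}$, use Corollary~\ref{cor-j-star} to write the summands as $j(a^{*})(\eta)^{*}\,j(b)(\eta\gamma)$, place $\vec x=j_{u}(a^{*})$ in $\Xu$ and $\vec y(\eta)=j(b)(\eta\gamma)$ in $\Wg$ via Proposition~\ref{prop-ja-xu}, invoke Corollary~\ref{cor-sum-conv} for convergence, and then identify the value with $j(a*b)(\gamma)$ by approximating from $\gcgb$ and using the Hausdorffness of the fibres. The only cosmetic difference is that you justify $\vec y\in\Wg$ through the explicit isomorphism $\X_{v}\cong\Wg$ induced by $\eta\mapsto\eta\gamma$, where the paper instead verifies the defining norm condition directly; these are equivalent.
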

\begin{proof}
  To see that \eqref{eq:30} converges, it suffices to see that
  \begin{equation}
    \label{eq:31}
    \sum_{\eta\in G_{u}} j(a)(\eta^{-1}) \, j(b)(\eta\gamma)
  \end{equation}
  converges.  Using Corollary~\ref{cor-j-star}, the sum in
  \eqref{eq:31} is the same as
  \begin{equation}
    \label{eq:32}
    \sum_{\eta\in G_{u}}j(a^{*})(\eta)^{*} \, j(b)(\eta\gamma).
  \end{equation}

  Now let $\vec x_{a}(\eta)=j(a^{*})(\eta)$ for $\eta\in G_{u}$.  Then
  by Proposition~\ref{prop-ja-xu}, $\vec x_{a}\in \Xu$ with
  $\|\vec x_{a}\|_{\Xu}\le \|a\|_{r}$.  Note that the same proposition
  states that
  \begin{align}
    \Bigl\|\sum_{\eta\in
    G_{u}}j(b)(\eta\gamma)^{*} \, j(b)(\eta\gamma) \Bigr\|
    =
    \Bigl\| \sum_{\eta\in G_{v}} j(b)(\eta)^{*} \, j(b)(\eta) \Bigr\| \le
    \|b\|_{r}^{2}. 
  \end{align}
  In particular, if we let $ \vec{y}_{b} (\eta)=j(b)(\eta\gamma)$ for
  $\eta\in G_{u}$, then $\vec y_{b}$ is an element of $\Wg$ with norm
  bounded by $\|b\|_{r}$.  Now Corollary~\ref{cor-sum-conv}, applied
  to $\dual{\vec x_{a}}\tensor \vec y_{b}$, immediately implies that
  \eqref{eq:30} converges.  Let $(f_{i})$ and $(g_{i})$ be sequences
  in $\gcgb$ converging in $\csrgb$ to $a$ and $b$, respectively.
  Then by the above
  \begin{align}
    \|\vec x_{f_{i}}-\vec x_{a}\|_{\Xu}=\|\vec x_{(f_{i}-a)}\|_{\Xu}\le
    \|f_{i}-a\|_{r}. 
  \end{align}
  Hence $(\vec x_{f_{i}})$ converges to $\vec x_{a}$ in $\Xu$.
  Similarly, $(\vec y_{g_{i}})$ converges to $\vec y_{b}$ in $\Wg$.
  Thus
  $\Phi(\dual{\vec x_{f_{i}}}\tensor \vec y_{g_{i}}) \to \Phi(\dual{
    \vec x_{a} }\tensor \vec y_{b})$.  But
  \begin{align}
    \Phi(\dual{\vec x_{f_{i}}}\tensor \vec
    y_{g_{i}})
    =(f_{i}*g_{i})(\gamma)=j(f_{i}*g_{i})(\gamma). 
  \end{align}
  This suffices since the continuity of $j$ and multiplication
  implies that $j(f_{i}*g_{i})(\gamma)$ converges to $j(a*b)(\gamma)$.
\end{proof}

Our main result, Theorem~\ref{thm-main}, now follows from
Proposition~\ref{prop-j-inj}, Corollary~\ref{cor-j-star}, and
Corollary~\ref{cor-converges}.

\section{Examples}
\label{sec:examples}

As illustrated in \cite{muhwil:dm08}*{\S2}, Fell bundles and their
\cs-algebras subsume most examples of groupoid dynamical systems.
Hence our main theorem applies to many such examples when the groupoid
involved is \'etale.  We consider some such examples here, and the first
recovers \cite{bfpr:nyjm21}*{Proposition~2.8}.

\begin{example}[Twists]\label{ex-twists}
  Consider a twist $E$ over an \'etale groupoid $G$ as defined by
  Kumjian in \cite{kum:cjm86}.  To be precise,
  we have a central groupoid
  extension
  \begin{equation}
    \label{eq:50}
    \begin{tikzcd}
      \go\times \T\arrow[r,"\iota"] &E \arrow[r,"q",two heads]& G
    \end{tikzcd}
  \end{equation}
  where $\iota$ and $q$ are continuous groupoid homomorphisms such
  that $\iota$ is a homeomorphism onto its range, $q$ is an open
  surjection with kernel equal to the range of $\iota$, and
  \begin{equation}
    \label{eq:51}
    \iota(r(e),z) \, e=e \, \iota(s(e),z)\quad\text{for all $e\in E$
      and $z\in \T$.} 
  \end{equation}
  The associated \cs-algebra $\cs(G;E)$ can be realized as the
  \cs-algebra of the Fell bundle $p \colon \B\to G$ where $\B$ is the
  quotient of $E\times\C$ by the $\T$-action
  $z\cdot (e,\lambda)=(ze,z\lambda)$ and $p$ is given by
  $p([e,z])=q(e) $.  Then $[e,\lambda][f,\tau]=[ef,\lambda\tau]$
  whenever $(e,f)\in E^{(2)}$, and
  $[e,\lambda]^{*}=[e^{-1},\overline \lambda]$.

  As in \cite{ikrsw:jfa20}*{Proposition~1.4}, it is not hard to see
  that if $\check f \colon G\to \B$ is a continuous section, then
  $\check f(q(e))=[e,f(e)]$ where $f \colon E\to \C$ is a continuous
  function such that\footnote{Unfortunately, the literature is
    inconsistent as to whether there should be a complex conjugate on
    the section $z$ in \eqref{eq:54} when constructing the \cs-algebra
    $\cs(G;E)$.  As explained in \cite{erpwil:jot14}*{Example~2.3},
    the choice depends on whether one takes the Fell bundle $\B$ to be
    the complex line bundle defined above or its conjugate bundle
    $\overline E$.  We have opted to stay consistent with Kumjian's
    choice in \cite{kum:cjm86}. Note that $\cs(G;E)$ and
      $\cs(G;\overline E)$ each others opposite algebras---see
      \cite{bussim:ijm21}.}
  \begin{equation}
    \label{eq:54}
    f(z\cdot e)=zf(e).
  \end{equation}
  Therefore, if we let $C_{0}(G;E)$ be the set of continuous functions
  $f \colon E\to\C$ satisfying \eqref{eq:54} and such that
  $q(e)\mapsto |f(e)|$ vanishes at infinity on $G$, then
  Theorem~\ref{thm-main} implies that there is a norm reducing
  injective linear map $j \colon \cs_{r}(G;E)\to C_{0}(G;E)$ such that
  if $\check f\in \gcgb$, then $j(\check f)=f$.  Moreover, if
  $a,b\in\csrgb$ then
  \begin{equation}
    \label{eq:55}
    j(a^{*})(e)=\overline{j(a)(e^{-1})} \quad\text{and} \quad
    j(a*b)(e')=\sum_{q(e)\in G^{r(e')}}j(a)(e) \, j(b)(e^{-1}e').
  \end{equation}
\end{example}

\begin{remark}\label{rem-sum-over-g}
  It should be kept in mind that the sum in \eqref{eq:55} is
    taken over the set $G^{r(e')}$.  The quantity $j(a)(e) \, j(b)(e^{-1}e')$
depends only on $q(e)$.   Hence if we define $F(q(e))=j(a)(e) \,
j(b)(e^{-1}e')$, then the sum is equal to
\begin{equation}
  \label{eq:38}
  \sum_{\eta\in G^{r(e')}}F(\eta).
\end{equation}
Furthermore, since the sum in \eqref{eq:55} (or \eqref{eq:38})
is invariant under
rearrangement, the convergence is absolute.
  \end{remark}

  \begin{example}[Groupoid Crossed Products] \label{ex-cross-prod} Let
    $(\E,G,\alpha)$ be a groupoid dynamical system with $G$ \'etale.
    To be precise, $ k \colon \E \to\go$ is a \cs-bundle and
    $\alpha=\set{\alpha_{\gamma}}_{\gamma\in G}$ is a family of
    isomorphisms
    $\alpha_{\gamma} \colon E_{s(\gamma)}\to E_{r(\gamma)}$ such that
    $\gamma\cdot a=\alpha_{\gamma}(a)$ is a continuous action of $G$
    on the bundle $\E$.  Then we can realize the reduced crossed
    product $\E\rtimes_{\alpha, r}G$ as the reduced \cs-algebra of a
    Fell bundle $p \colon \B\to G$ where
    $\B =r^{*}\E=\set{(a,\gamma)\in\E\times G:q(a)=r(\gamma)}$ with
    $p(a,\gamma)=\gamma$.  Then
    $(a,\gamma)(b,\eta)=(a\alpha_{\gamma}(b),\gamma\eta)$ for
      $(\gamma,\eta)\in G^{(2)}$, and
    $(a,\gamma)^{*} =(\alpha_{\gamma}^{-1}(a^{*}),\gamma^{-1})$.

  As above, if $\check f\in \Gamma_{0}(G;\B)$, then there is a
  continuous function $f \colon G\to \E$ such that
  $f(\gamma)\in E_{r(\gamma)}$ and $\gamma\mapsto \|f(\gamma)\|$
  vanishes at infinity on $G$.  If we denote the collection of such
  functions by $C_{0,r}(G,\E)$, then Theorem~\ref{thm-main} implies
  that there is a norm reducing injective linear map
  $j \colon \E\rtimes_{\alpha,r}G\to C_{0,r}(G,\E)$ such that
  $j(\check f)=f$ for all $f\in \Gamma_{c}(G;\B)$.  Moreover, if
  $a,b\in \E\rtimes_{\alpha,r}G$, then
  \begin{equation}
    \label{eq:56}
    j(a^{*})(\gamma)=j(a)(\gamma^{-1})^{*} \quad \text{and} \quad
    j(a*b)(\gamma) = \sum_{\eta\in G^{r(\gamma)}} j(a)(\eta) \, 
    \alpha_{\eta}\bigl(j(b)(\eta^{-1}\gamma)\bigr) .
  \end{equation}
\end{example}

  \begin{remark}
    \label{rem-group-case} Of course, Example~\ref{ex-cross-prod} 
    applies to crossed products by discrete groups.  In that case, the
    result has been known for some time and appears in Zeller-Meier
    \cite{zel:jmpa68}*{Theorem~4.2} with a weaker notion of
    convergence for the convolution product.  Zeller-Meier also
    allows for a unitary valued $2$-cocycle.  However, using the
    observations in \cite{exelac:pams97}, we can use a Fell bundle
    model to include cocycles.  We omit the details.
  \end{remark}

\begin{example}[Green-Renault Twisted Crossed Products]
  We can combine the idea of a twist and a groupoid crossed product to
  arrive at Renault's generalization of a Green twisted crossed
  product from \cite{ren:jot91}.  As in
  \cite{muhwil:dm08}*{Example~2.5} or the slightly more general set-up
  in \cite{ikrsw:jfa20}*{\S1.4}, we can realize these twisted crossed
  products via a Fell bundle.  We start with a 
  groupoid extension that fixes the unit space,
  \begin{equation}
    \label{eq:59}
    \begin{tikzcd}
      \A\arrow[r,"\iota",hook] & \Sigma\arrow[r,"q",two heads] & G
    \end{tikzcd}
  \end{equation}
  where $\A$ is a subgroupoid group bundle of $\Sigma$ with unit space
  $\go$, $\iota$ is the inclusion map, and $q$ is a continuous open
  surjection restricting to a homeomorphism of $\Sigma^{(0)}$ with
  $\go$.  (Hereafter, 
  we identify $\Sigma^{(0)}$ with $\go$.)  We also
  require a groupoid dynamical system $(\E
  ,\Sigma,\alpha)$ for a \cs-bundle $k\colon \E\to\go$ as in
  Example~\ref{ex-cross-prod}.

  To define the twist, we let $U(E_{u})$ for $u\in \go$ be the unitary
  group of the \cs-algebra $E_{u}$ and let $\coprod_{u\in\go}U(E_{u})$
  be the corresponding (algebraic) group bundle over $\go$.  Then a
  {\em twisting map} is a unit-space fixing homomorphism
  $\theta \colon \A\to \coprod_{u\in\go}U(E_{u})$ that induces an
  action by isometric Banach space isomorphisms of $\A$ on $\E$ such
  that $(a,e)\mapsto a\cdot e \coloneqq \theta(a)e$ is continuous
  from $\A*\E = \set{ (a, e)\in \A\times \E: s(a)=k(e) }$ to
    $\E$, and which satisfies
  \begin{align}
    \label{eq:48}
    \alpha_{a}(e)
    &=\theta(a)e\theta(a)^{*} &&\text{for all
                                 $(a,e)\in \A*\E$, and} \\
    \theta(\sigma a\sigma^{-1})
    &=
      \overline{\alpha}_{\sigma}(\theta(a))&& \text{for all
                                              $(\sigma,a)\in \Sigma*\A$.}
  \end{align}
  
  To get a Fell bundle, we observe that $\A$ acts on $r^{*}\E$ by
  $a\cdot (\sigma,e)=(a\sigma,e\theta(a)^{*})$.  Then the left
  quotient $\B=\A\backslash r^{*}\E$ is a Banach bundle over $G$ with
  $p \colon \B\to G$ given by $p([\sigma,e])= q(\sigma) $.  Then $\B$
  is a Fell bundle with
  \begin{equation}
    \label{eq:68}
    [\sigma,e][\tau,f]=[\sigma\tau,e\alpha_{\sigma}(f)]\quad\text{and}
    \quad [\sigma,e]^{*}=[\sigma^{-1},\alpha_{\sigma}^{-1}(e^{*})].
  \end{equation}
  Using \cite{ikrsw:jfa20}*{Proposition~1.4},
  we see that sections
  $\check f\in \Gamma(G;\B)$ correspond to continuous functions
  $f \colon \Sigma \to\E$ such that
  $f(a\sigma)=f(\sigma)\theta(a)^{*}$ for all $(a,\sigma)\in \A*\E$.
  If we let
  $C_{0}(G;\Sigma;\E)$
  be the continuous functions
  $f \colon \Sigma\to \E$ transforming as above and such that
  $\gamma\mapsto \|\check{f}(\gamma)\|$
  vanishes at infinity on $G$, then
  Theorem~\ref{thm-main} implies that there is an injective norm
  reducing linear map $j \colon \csrgb\to C_{0}(G;\Sigma;\E)$
  such that
  for all $a,b\in \csrgb$ we have
  \begin{equation}
    \label{eq:69}
    j(a^{*})(\sigma)=j(a)(\sigma^{-1})^{*} \quad \text{and} \quad
    j(a*b)(\sigma) = \sum_{q(\tau)\in G^{r(\sigma)}}j(a)(\tau) \,
      \alpha_{\tau}\bigl(j(b)(\tau^{-1}\sigma)\bigr)
  \end{equation}
where the comments in Remark~\ref{rem-sum-over-g} apply to the
  sum since $j(a)(\tau) \,
  \alpha_{\tau}\bigl(j(b)(\tau^{-1}\sigma)\bigr)$ depends only on $q(\tau)$.
\end{example}



\def\noopsort#1{}\def\cprime{$'$} \def\sp{^}
\begin{bibdiv}
\begin{biblist}

\bib{bfpr:nyjm21}{article}{
      author={Brown, Jonathan~H.},
      author={Fuller, Adam~H.},
      author={Pitts, David~R.},
      author={Reznikof, Sarah~A.},
       title={Graded {$C^\ast$}-algebras and twisted groupoid
  {$C^\ast$}-algebras},
        date={2021},
     journal={New York J. Math.},
      volume={27},
       pages={205\ndash 252},
      review={\MR{4209533}},
}

\bib{bmz:pems13}{article}{
      author={Buss, Alcides},
      author={Meyer, Ralf},
      author={Zhu, Chenchang},
       title={A higher category approach to twisted actions on
  {$C^*$}-algebras},
        date={2013},
        ISSN={0013-0915},
     journal={Proc. Edinb. Math. Soc. (2)},
      volume={56},
      number={2},
       pages={387\ndash 426},
         url={http://dx.doi.org/10.1017/S0013091512000259},
      review={\MR{3056650}},
}

\bib{bussim:ijm21}{article}{
      author={Buss, Alcides},
      author={Sims, Aidan},
       title={Opposite algebras of groupoid {$C$}*-algebras},
        date={2021},
        ISSN={0021-2172},
     journal={Israel J. Math.},
      volume={244},
      number={2},
       pages={759\ndash 774},
         url={https://doi.org/10.1007/s11856-021-2190-5},
      review={\MR{4344044}},
}

\bib{dg:banach}{book}{
      author={Dupr{\'e}, Maurice~J.},
      author={Gillette, Richard~M.},
       title={Banach bundles, {B}anach modules and automorphisms of
  ${C}^*$-algebras},
   publisher={Pitman (Advanced Publishing Program)},
     address={Boston, MA},
        date={1983},
      volume={92},
        ISBN={0-273-08626-X},
      review={\MR{85j:46127}},
}

\bib{exelac:pams97}{article}{
      author={Exel, Ruy},
      author={Laca, Marcelo},
       title={Continuous {F}ell bundles associated to measurable twisted
  actions},
        date={1997},
        ISSN={0002-9939},
     journal={Proc. Amer. Math. Soc.},
      volume={125},
      number={3},
       pages={795\ndash 799},
         url={http://dx.doi.org/10.1090/S0002-9939-97-03618-6},
      review={\MR{1353382 (97e:46073)}},
}

\bib{fd:representations1}{book}{
      author={Fell, James M.~G.},
      author={Doran, Robert~S.},
       title={Representations of {$*$}-algebras, locally compact groups, and
  {B}anach {$*$}-algebraic bundles. {V}ol. 1},
      series={Pure and Applied Mathematics},
   publisher={Academic Press Inc.},
     address={Boston, MA},
        date={1988},
      volume={125},
        ISBN={0-12-252721-6},
        note={Basic representation theory of groups and algebras},
      review={\MR{90c:46001}},
}

\bib{fd:representations2}{book}{
      author={Fell, James M.~G.},
      author={Doran, Robert~S.},
       title={Representations of {$*$}-algebras, locally compact groups, and
  {B}anach {$*$}-algebraic bundles. {V}ol. 2},
      series={Pure and Applied Mathematics},
   publisher={Academic Press Inc.},
     address={Boston, MA},
        date={1988},
      volume={126},
        ISBN={0-12-252722-4},
        note={Banach $*$-algebraic bundles, induced representations, and the
  generalized Mackey analysis},
      review={\MR{90c:46002}},
}

\bib{ikrsw:jfa20}{article}{
      author={Ionescu, Marius},
      author={Kumjian, Alex},
      author={Renault, Jean~N.},
      author={Sims, Aidan},
      author={Williams, Dana~P.},
       title={{$C^*$}-algebras of extensions of groupoids by group bundles},
        date={2021},
        ISSN={0022-1236},
     journal={J. Funct. Anal.},
      volume={280},
      number={5},
       pages={in press},
         url={https://doi.org/10.1016/j.jfa.2020.108892},
      review={\MR{4189010}},
}

\bib{kum:cjm86}{article}{
      author={Kumjian, Alexander},
       title={On {\cs}-diagonals},
        date={1986},
     journal={Canad. J. Math.},
      volume={38},
       pages={969\ndash 1008},
}

\bib{kum:pams98}{article}{
      author={Kumjian, Alex},
       title={Fell bundles over groupoids},
        date={1998},
        ISSN={0002-9939},
     journal={Proc. Amer. Math. Soc.},
      volume={126},
      number={4},
       pages={1115\ndash 1125},
      review={\MR{MR1443836 (98i:46055)}},
}

\bib{lan:hilbert}{book}{
      author={Lance, E.~Christopher},
       title={Hilbert {$C\sp *$}-modules},
      series={London Mathematical Society Lecture Note Series},
   publisher={Cambridge University Press},
     address={Cambridge},
        date={1995},
      volume={210},
        ISBN={0-521-47910-X},
        note={A toolkit for operator algebraists},
      review={\MR{MR1325694 (96k:46100)}},
}

\bib{muhwil:dm08}{article}{
      author={Muhly, Paul~S.},
      author={Williams, Dana~P.},
       title={Equivalence and disintegration theorems for {F}ell bundles and
  their {$C\sp *$}-algebras},
        date={2008},
        ISSN={0012-3862},
     journal={Dissertationes Math. (Rozprawy Mat.)},
      volume={456},
       pages={1\ndash 57},
      review={\MR{MR2446021}},
}

\bib{ren:irms08}{article}{
      author={Renault, Jean~N.},
       title={Cartan subalgebras in {$C^*$}-algebras},
        date={2008},
        ISSN={0791-5578},
     journal={Irish Math. Soc. Bull.},
      number={61},
       pages={29\ndash 63},
      review={\MR{2460017 (2009k:46135)}},
}

\bib{ren:groupoid}{book}{
      author={Renault, Jean~N.},
       title={A groupoid approach to {\cs}-algebras},
      series={Lecture Notes in Mathematics},
   publisher={Springer-Verlag},
     address={New York},
        date={1980},
      volume={793},
}

\bib{ren:jot91}{article}{
      author={Renault, Jean~N.},
       title={The ideal structure of groupoid crossed product \cs-algebras},
        date={1991},
     journal={J. Operator Theory},
      volume={25},
       pages={3\ndash 36},
}

\bib{rw:morita}{book}{
      author={Raeburn, Iain},
      author={Williams, Dana~P.},
       title={Morita equivalence and continuous-trace {$C^*$}-algebras},
      series={Mathematical Surveys and Monographs},
   publisher={American Mathematical Society},
     address={Providence, RI},
        date={1998},
      volume={60},
        ISBN={0-8218-0860-5},
      review={\MR{2000c:46108}},
}

\bib{simwil:nyjm13}{article}{
      author={Sims, Aidan},
      author={Williams, Dana~P.},
       title={An equivalence theorem for reduced {F}ell bundle
  {$C^*$}-algebras},
        date={2013},
     journal={New York J. Math.},
      volume={19},
       pages={159\ndash 178},
}

\bib{erpwil:jot14}{article}{
      author={van Erp, Erik},
      author={Williams, Dana~P.},
       title={Groupoid crossed products of continuous-trace
  {$C^\ast$}-algebras},
        date={2014},
        ISSN={0379-4024},
     journal={J. Operator Theory},
      volume={72},
      number={2},
       pages={557\ndash 576},
      review={\MR{3272047}},
}

\bib{wil:crossed}{book}{
      author={Williams, Dana~P.},
       title={Crossed products of {$C{\sp \ast}$}-algebras},
      series={Mathematical Surveys and Monographs},
   publisher={American Mathematical Society},
     address={Providence, RI},
        date={2007},
      volume={134},
        ISBN={978-0-8218-4242-3; 0-8218-4242-0},
      review={\MR{MR2288954 (2007m:46003)}},
}

\bib{yam:xx87}{unpublished}{
      author={Yamagami, Shigeru},
       title={On the ideal structure of {$C^*$}-algebras over locally compact
  groupoids},
        date={1987},
        note={(Unpublished manuscript)},
}

\bib{zel:jmpa68}{article}{
      author={Zeller-Meier, Georges},
       title={Produits crois\'es d'une {$C\sp{\ast} $}-alg\`ebre par un groupe
  d'automorphismes},
        date={1968},
        ISSN={0021-7824},
     journal={J. Math. Pures Appl. (9)},
      volume={47},
       pages={101\ndash 239},
      review={\MR{MR0241994 (39 \#3329)}},
}

\end{biblist}
\end{bibdiv}

\end{document}